\documentclass[letterpaper,12pt]{article}
\usepackage[letterpaper, total={6.5in, 9in}]{geometry}
\usepackage[utf8]{inputenc}
\usepackage{amsfonts}
\usepackage{amsthm}
\usepackage{amssymb}
\usepackage{mathtools}
\usepackage{amsmath}
\usepackage{relsize}
\usepackage{comment}
\usepackage{hyperref}
\usepackage{graphicx}
\usepackage{relsize}
\usepackage{subcaption}
\usepackage[labelformat=parens,labelsep=quad,skip=3pt]{caption}
\usepackage{float}
\usepackage{xcolor}

\usepackage[T1]{fontenc}
\usepackage{authblk}

\usepackage[ backend=bibtex, style=ieee, sorting=nty  ]{biblatex}
\addbibresource[datatype=bibtex]{Refbiblatex.bib}
\DeclareFieldFormat{journaltitle}{#1\isdot}
\DeclareFieldFormat[article,book]{title}{#1\isdot}
\DeclareFieldFormat[book]{title}{#1\addcomma}

\usepackage[normalem]{ulem}

\newtheorem{thm}{Theorem}[section]

\newtheorem{cor}{Corollary}[section]
\newtheorem{lem}{Lemma}[section]

\author{Hossein Dabirian\thanks{dabirian@umich.edu}}
\author{Vijay Subramanian\thanks{vgsubram@umich.edu}}
\affil{EECS Department, University of Michigan}

\setcounter{Maxaffil}{0}

\date{}

\allowdisplaybreaks

\begin{document}
\title{Connectivity of a Family of Bilateral Preference Random Graphs}
\maketitle

\begin{abstract}
    We study the bilateral preference graphs $\mathit{LK}(n, k)$ of La and Kabkab, obtained as follows. Put independent and uniform $[0,1]$ weights on the edges of the complete graph $K_n$. Then, each edge $(i,j)$ is included in $\mathit{LK}(n,k)$ if it is bilaterally preferred, in the sense that it is among the $k$ edges of lowest weight incident to vertex $i$, and amongst the $k$ edges of lowest weight incident to vertex $j$. We show that $k = \log(n)$ is the connectivity threshold, solving a conjecture of La and Kabkab, and obtaining finer results about the window. We also investigate the asymptotic behavior of the average degree of vertices in $\mathit{LK}(n, k)$ as $n\rightarrow\infty$.
\end{abstract}

\section{Introduction}\label{sec:intro}

In this work we study the bilateral preference graphs $\mathit{LK}(n,k)$ proposed by La and Kabkab (\textsf{LK})~\cite{la2015new}, and prove their connectivity conjecture. These graphs are subgraphs of the complete graph $K_n$, and are constructed as follows. Assign \emph{i.i.d.} uniform in $[0,1]$ weights to the $\binom{n}{2}$ edges. Then, edge $(i,j)$ is present in $\mathit{LK}(n,k)$ if there is bilateral agreement in the preferences of vertices $i$ and $j$ for each other---if it is among the $k$ edges of lowest weight incident to vertex $i$, and amongst the $k$ edges of lowest weight incident to vertex $j$.  Due to the bilateral agreement in the preferences, these graphs differ from both the Cooper--Frieze (\textsf{CF}) $\mathit{CF}(n,k)$ random graphs~\cite{cooper1995connectivity} that are constructed via unilaterally preferred edges, and the (homogeneous or inhomogeneous) Erd\H{o}s--R\'enyi (\textsf{ER}) $\mathit{G}(n,p)$ random graphs~\cite{erdHos1959randomgraph,erdHos1960evolution,renyi1959connected,shang2023connectivity} where the edges are chosen independently.

Our main results---Theorem~\ref{thm:disconnected} and Theorem~\ref{thm:connected}---can be collectively summarized as follows.

\begin{thm}\label{thm:mainres1}
   The random graph $\mathit{LK}(n,k)$ with $k \leq \log (n) + t'\log \log (n)\sqrt{\log (n)}$ for $t'<-\frac{1}{2}$, is not connected with high probability, and when $k \geq \log (n) + t'\log \log (n)\sqrt{\log (n)}$ for $t'>\frac{1}{2}$, it is connected with high probability.
\end{thm}

\paragraph{Outline of the Proof of the Main Results:} The proof of the disconnectedness result---Theorem~\ref{thm:disconnected}---is established using the second moment method in which we prove that the expected number of isolated vertices grows to infinity and show that asymptotically any pair of vertices is isolated in an independent fashion. From this, we show that with high probability there exists an isolated vertex which yields the disconnectedness result. For the connectivity part---Theorem~\ref{thm:connected}---, we take the same approach as in La and Kabkab~\cite{la2015new}---namely, find 
\textsf{ER} sub and super random graphs with $n$ vertices of the $\mathit{LK}(n,k)$ graph. This analysis then works by excluding the possibility of having components with $r$ vertices for $10\leq r\leq n/2$. Independently, we also rule out the existence of components of size $O(1)$. Note that the connectivity/disconnectedness results hold beyond the regime where one can develop a simple association with \textsf{ER} random graphs---as in La and Kabkab~\cite{la2015new}. Whereas we don't present the proof here, using the methodology of Cooper and Frieze~\cite{cooper1995connectivity} one gets sharper connectivity/disconnectedness results in comparison to La and Kabkab~\cite{la2015new}, but the final result is still not as sharp as as Theorems~\ref{thm:connected} and \ref{thm:disconnected}.

In addition to the results on connectivity of $\mathit{LK}(n,k)$ graphs, we also present results on the asymptotic behavior of the mean degree---see Theorem~\ref{thm:genasymptotic} and Theorem~\ref{thm:meandegree}---, which we summarize below.

\begin{thm}\label{thm:mainres2}
For $\mathit{LK}(n,k)$ graphs, suppose $D$ represents the degree of a randomly chosen vertex. If $k=o(\sqrt{n})$, then we have the following asymptotic characterization for the mean degree of $\mathit{LK}(n,k)$ graphs,
\begin{align*}
    \mathbb{E}[D]=k - \left[ \frac{(2k-1)}{2^{2k-1}}\binom{2k-2}{k-1} \right](O(k^2/n)+1).
\end{align*}
If we further assume that $k=o(n^{1/3})$ and $k$ grows to infinity as $n$ goes to infinity, then the average degree has the following asymptotic behavior,
\[\mathbb{E}[D]= k - \sqrt{\frac{k}{\pi}} + \frac{1}{8\sqrt{\pi k}} + o\left(\frac{1}{\sqrt{k}}\right).\]
\end{thm}

\smallskip
\noindent\textbf{Remark:}
   Theorem~\ref{thm:disconnected} and Theorem~\ref{thm:connected} in combination with Theorem~\ref{thm:meandegree} show that the \textsf{LK} random graph family provides another instance (except for a small gap around $\log(n)$) wherein, on the one hand, the mean degree being strictly greater than $\log(n)$ implies connectivity, and, on the other hand, the mean degree being strictly smaller that $\log(n)$ implies the existence of isolated vertices. For the \textsf{LK} family this holds despite the dependencies in the graph construction, unlike either the homogeneous or inhomogeneous \textsf{ER} random graph family. Finally, note also that the asymptotic characterization for the mean degree of $\mathit{LK}(n,k)$ graphs spans a wider range of $k$ when compared to the results of Moharrami et al.~\cite{moharrami2024erlang} (to be discussed later). Note also that our precise characterization of the mean degree yields a correction to the affine behavior conjectured in \cite[Section 6.4]{la2015new}.

\smallskip
\noindent\textbf{Open Problems:} Whereas Theorem~\ref{thm:disconnected} and Theorem~\ref{thm:connected} establish the conjecture by La and Kabkab, they still do not precisely identify the connectivity threshold in comparison to existing results on the (homogeneous) \textsf{ER} model~\cite{renyi1959connected} or even the \textsf{CF} model~\cite{cooper1995connectivity}---specifically, if $\log (n) -0.5\log \log (n)\sqrt{\log (n)} \leq k \leq \log (n) + 0.5\log \log (n)\sqrt{\log (n)}$, then it is not known if connectivity holds or not for the \textsf{LK} model. We believe this arises due to our use of analytically simpler sufficient conditions to prove our results---for example, to show that vertex $i$ is isolated, we insist that all of its edges in $\mathbb{K}_n$ are not preferred by its top $k$ neighbors, with no intersection between the preferred edges of the neighbors. We use an elaborate sufficient condition to prove connectivity. Hence, a more elaborate (or even exact) set of sufficient conditions for both regimes could close the gap---this is an open problem for future work. Similarly, the proof methodology used to establish connectivity in \cite{la2015new} also allows La and Kabkab to determine the diameter of the $\mathit{LK}(n,k)$ graphs, so another open problem is characterizing the diameter for the additional range of $k(n)$ from Theorem~\ref{thm:connected} where connectivity holds. Finally, two additional question are worth investigating. The first is to characterize the limiting distribution of $(D-\mathbb{E}[D])/\sqrt{\mathrm{Var}[D]}$. We expect this to be hard to answer owing to the complicated dependence structure of the graph. Note that even $\mathrm{Var}[D]$ is hard to calculate---using our method in the proof of Theorem~\ref{thm:meandegree} to calculate is hard due to the dependence of the connectivity of the nodes. The second is to find conditions under which Theorem~\ref{thm:meandegree} can be generalized to a concentration of node degrees around the mean degree, which is a result that holds for \textsf{ER} random graphs~\cite[Chapter 4]{draief2010epidemics} when $p(n)=\tfrac{t \log(n)}{n}$ (for $t>0$ and all large enough $n$).

\smallskip
\noindent 
\textbf{Organization of Paper:} We start by discussing related work in Section~\ref{subsec:RW}. Thereafter, we formalize the mathematical model in Section~\ref{sec:model}. In Section~\ref{sec:asympdist} we show how the multinomial distribution arises in the limit of $n\rightarrow\infty$ via a connection to an infinite urn model. Sections~\ref{sec:disconnectivity}, \ref{sec:connectivity} and \ref{sec:avgdegree}, then establish the connectivity results and the characterization of the mean degree of the graph, respectively.

\subsection{Related Work}\label{subsec:RW}


Graphs have been used to study interesting phenomena in many application domains~\cite{easley2010networks,jackson2010social,newman2018networks}. They have been studied using probabilistic tools for about $50$ years~\cite{draief2010epidemics,van2016random}---starting with Gilbert in \cite{gilbert1959random}, and Erd\H{o}s and R\'enyi in \cite{erdHos1959randomgraph, erdHos1960evolution}. 
An (homogeneous) Erd\H{o}s--R\'enyi random graph with $n$ vertices is characterized by a parameter $0\leq p(n) \leq 1$ that is the probability of existence of each potential undirected edge between two vertices. A realization of a graph from the \textsf{ER}-model is denote by $\mathit{G}(n,p)$ where the dependence of $p$ on $n$ is suppressed for brevity. Edges then appear independently based on Bernoulli coin tosses with probability $p(n)$ for heads, and with no intrinsic preference on the edges on the part of the vertices. 
It was shown~\cite{erdHos1960evolution,erdHos1959randomgraph} that if $p(n)=t \log(n)/n$, with high probability, the graph $\mathit{G}(n,p)$ is connected for $t>1$ and disconnected for $t<1$---the refined result~\cite{renyi1959connected} is that if (for large enough $n$) $p(n)=(\log(n)+c)/n$, then the graph is connected with probability $e^{-e^{-c}}$. The critical property needed for connectivity to hold is that the mean degree $t\log(n)$ is strictly greater than $\log(n)$, and for isolated vertices to exist is that the mean degree is strictly less than $\log(n)$; these results also generalize to inhomogeneous \textsf{ER} graphs---see \cite{shang2023connectivity}.

Cooper and Frieze in \cite{cooper1995connectivity} described a new family of random graphs based on preferences constructed using a distance measure. They considered the complete graph $K_n$ and assigned independent uniform $[0,1]$ random variables as distances to all edges. They, then, keep the $k(n)$ shortest edges incident to each vertex. In this model, the concept of distance induces a preference order on all edges---the shorter an edge is, the higher its preference is. Since the distances are \emph{i.i.d.}, the preference order on all the edges will be chosen uniformly over the set of all permutations. We say a vertex 
proposes an edge if the edge belongs to the set of the $k(n)$ most-preferred edges incident to it. Note that in the Cooper--Frieze model, an edge is kept if at least one of its endpoints 
proposes it, which is a unilateral perspective. A realization of a graph from the \textsf{CF} model is denoted as $\mathit{CF}(n,k)$ with the dependence of $k$ in $n$ suppressed for brevity.

Preference relations of the sort used in the \textsf{CF} model create a complicated structure on any resulting graph. For example, keeping edges based on their ranking even in a unilateral manner, induces an edge dependency in the \textsf{CF} model (which is not the case for the \textsf{ER} model). Then, \cite{cooper1995connectivity}  proved that the graph $\mathit{CF}(n,k)$ is connected with high probability for $k\geq 3$. Cooper and Frieze also provided upper and lower bounds for the probability of connectivity as $n\rightarrow\infty$ when $k=2$. 

Bilateral preference random graphs introduced by La and Kabkab \cite{la2015new} are constructed using the same parameter $k(n)$ as the \textsf{CF} random graphs. 
Considering vertices as agents, all vertices are assumed to have their own preferences on the potential edges with others via a priority or preference order over other vertices, where the individual vertex preferences result from a single global preference order on the $\binom{n}{2}$ possible edges. Then, in the La--Kabkab model, in contrast to both the \textsf{ER} and the \textsf{CF} models, an edge is drawn if and only if each end vertex has the other vertex in its $k(n)$ preferred vertices. In other words, an edge is formed if and only if both end vertices 
propose the edge to the other vertex, which then leads to the need for bilateral preference. Hence, the parameter $k(n)$ is now the \textit{maximum} number of other vertices each vertex wants as its neighbors. A graph realization from the \textsf{LK} model is denoted by $\mathit{LK}(n,k)$; the dependence of $k$ in $n$ is again suppressed for brevity.

Following \cite{la2015new} one can interpret the bilateral preference based \textsf{LK} model as a network formation process conducted via a game (selfish objective maximization) among bounded rational agents at the vertices. These kinds of network formation processes~\cite{easley2010networks,jackson2010social,newman2018networks} have been studied in social science and economics. We assume that the $n$ agents are aware of their own benefits from the potential pairwise connections with others where the benefit is assessed via an appropriate cost or distance measure---we assume \textit{without loss of generality} that the pairwise costs or distances are generated are independent and either uniformly distributed in $[0,1]$, or exponentially distributed with parameter $1$. We point the reader to \cite[Observation O-3]{la2015new} for the reason why either of these choices of distributions (or any other continuous distribution) results in the same realization of graphs. Again, following \cite{la2015new} the agents are bounded rational, and only use local information instead of global information. Finally, the agents also have limited memory, so that each agent prefers the $k(n)$ most profitable/valuable connections based solely on the cost or distance of the potential edge instead of some global objective like connectivity. Then, each edge (connection) will exist if and only if both parties prefer it, and from this process the graph of pairwise connections will be generated. 

A core question studied in \cite{la2015new} is to determine $k(n)$ that results in the graphs produced being connected. La and Kabkab showed that the following results hold with high probability: 1) if $k(n) > C \log(n)$ for $C=2.4625$, then the graph is connected; and 2) $k(n)< c \log(n)$ for $c=0.5$, then the graph has isolated vertices and so is not connected. Furthermore, using extensive simulations, they also conjectured that the connectivity threshold was exactly $k(n)=\log(n)$ which is, surprisingly, the same threshold for connectivity of the \textsf{ER} random graph family in terms of the mean degree. Note once again that $k(n)$ is the maximum degree for an $\mathit{LK}(n,k)$ graph. As discussed earlier, we establish the conjecture of La and Kabkab by providing a finer characterization of when connectivity holds and when it does not.

A different line of work from the connectivity question for $\mathit{LK}(n,k)$ graphs is that of Moharrami \emph{et al.} \cite{moharrami2024erlang}---they introduced a new branching process called the Erlang Weighted Tree (EWT) as the local weak limit of the \textsf{LK} model graphs in the sparse regime. Specifically, in \cite{moharrami2024erlang} the parameter $k(n)$ is a (finite) random parameter $k$ for each vertex that is independently chosen and identically distributed with a distribution on $\mathbb{N}$ with finite mean. In this regime, \textsf{LK} graphs are shown to be different from \textsf{ER} graphs. Moharrami \emph{et al.} show this by finding the degree distribution of the root vertex and also its mean degree, which coincide with the asymptotic degree distribution and mean degree for \textsf{LK} graphs, respectively. As discussed earlier, in Section~\ref{sec:avgdegree} we study the asymptotics of the mean degree of an $\mathit{LK}(n,k)$ graph for a wider range of $k$, and as a consequence of results we present an alternate derivation of the mean-degree from  \cite{moharrami2024erlang} when $k$ is deterministically chosen and finite. Moharrami \emph{et al.} also discuss the probability of extinction of an EWT, and conjecture its relevance to the size of the giant component~\cite{erdHos1960evolution,draief2010epidemics,van2016random}of an \textsf{LK} graph, when there is one.  

\section{Mathematical Model}\label{sec:model}

Consider the complete undirected graph $\mathbb{K}_n$ with vertices $[n]=\{1,2,\cdots,n\}$ and edges $\{(i,j): 1\leq i,j\leq n, i\neq j\}$ where we assume $(i,j)=(j,i)$. Let $k(n)$ be an integer such that $1\leq k(n)\leq n$; henceforth, to avoid cumbersome notation, we will use $k$ instead of $k(n)$. We assign independently and identically distributed random variables called priority scores to all edges of $K_n$. For any explicit calculations, one can assume they are uniformly distributed in $[0,1]$ or exponentially distributed with parameter $1$; this holds because we will only be interested in the order statistics. We denote the score of edge $(i,j)$ by $V(i,j)=V(j,i)$. The set of all scores of the edges of vertex $i\in[n]$ is denoted by $\mathcal{V}_i=\{V(i,1),\cdots,V(i,i-1), V(i,i+1),\cdots, V(i,n)\}$, and all the associated edges by $\mathcal{E}_i=\{(i,1),\cdots, (i,i-1),(i,i+1),\cdots,(i,n)\}$. Without loss of generality, we can also assume that the scores are distinct, then $(R_i^j)_{1\leq j\leq n-1}$ represents an order on $[n]\setminus\{i\}=\{1,2,\cdots,i-1,i+1,\cdots,n\}$ based on $V(i,j)$ values. In other words, for each $1\leq i\leq n$, the random vector $\mathcal{R}_i=(R_i^1,R_i^2,\cdots,R_i^{n-1})$ is a permutation of $[n]\setminus\{i\}$ in which
\[V(i,R_i^1)> V(i,R_i^2)>\cdots> V(i,R_i^{n-1}).\]
As the scores are chosen \emph{i.i.d.} from a continuous distribution, the distribution of the random vector $\mathcal{R}_i$ is uniform among all permutations of $[n]\setminus\{i\}$ as it only depends on the order-statistics. The scores also impose a permutation over all the edges. This plays an important role in defining the bilateral preference \textsf{LK} random graphs.

Let $V(i,j)$ be realized for all edges $(i,j)$ and parameter $k$ be fixed. Then we can determine two different classes of random graphs on vertices $[n]$. We first define the notion of preference. If $V(i,j)$ is among the $k$ largest scores in $\mathcal{V}_i$, i.e., $j\in \mathcal{R}_i^{\leq k}:=\{R_i^1,R_i^2,\cdots,R_i^k\}$ or $(i,j)\in\mathcal{E}_i^{\leq k} := \{(i,R_i^1),\cdots,(i,R_i^k)\}$, we say that vertex $i$ proposes/prefers edge $(i,j)$. Moreover, $\mathcal{E}_i:=\mathcal{E}_i^{\leq n-1}$ denotes all the edges incident to vertex $i$. 
The first model introduced by Cooper and Frieze \cite{cooper1995connectivity} let the edge $(i,j)$ be present if at least one of $i$ or $j$ 
prefers $(i,j)$. Reminder that we denote realizations of these random graphs by $\mathit{CF}(n,k)$, which we call the class of unilaterally proposed graphs. The second model described by La and Kabkab \cite{la2015new} requires a 
preference by both vertices for an edge to appear, which we deem as bilateral preference. Again, we denote realization of this class of graphs by $\mathit{LK}(n,k)$, and we call it the class of bilateral preference random graphs.\footnote{We can also construct \textsf{ER} random graphs through independent node preferences: each node prefers the possible $n-1$ edges independently with probability $\sqrt{p(n)}$ with an edge forming only via bilateral preference.}

Both $\mathit{CF}(n,k)$ and $\mathit{LK}(n,k)$ graphs only depend on the order of $V(i,j)$, and not the precise values. This is the consequence of the scores (costs or distances) being \emph{i.i.d.} from a continuous distribution, which results in a uniformly drawn permutation among all permutations of edges of $\mathbb{K}_n$; again, note \cite[Observation O-3]{la2015new}. Assigning a random variable to each edge only helps to motivate the graph construction, but while determining any underlying probability, we will typically use the random permutations on edges viewpoint.


\section{Asymptotically Equivalent Distribution}\label{sec:asympdist}

In our analysis, we will use the abstraction of an infinite urn model. 
The following two lemmas enable us to connect the probability of an event in the finite permutations space to an event in the infinite urn model. In essence, we will show that the appropriate probabilities converge to a binomial distribution in Lemma \ref{common-lem1} and a negative multinomial distribution in Lemma \ref{common-lem2}. Before delving into details, we will consider the set of all permutations with order restrictions on some elements. The notation $x\succ y$ denotes that $x$ appears earlier than $y$ in the permutations. These simple orders can be combined using AND and OR to create more complex order restrictions on the permutation.
For instance, when we refer to the set of permutations of $\{1,2,3\}$ with order restrictions $1\succ 2 \text{ AND } 1\succ 3$, this narrows down the set of all $3!=6$ permutations to just $\{(1,2,3),(1,3,2)\}$. If we instead use the restriction $1\succ 2 \text{ OR } 1\succ 3$, the resulting set of permutations is $\{(1,2,3),(1,3,2),(2,1,3),(3,1,2)\}$.

\begin{lem}
Assume we have $M=m_0+m_1+\cdots +m_s$ objects, each assigned a type, where $m_i$ denotes the number of objects of type $i$ for each $0\leq i \leq s$. Consider a uniformly random permutation of all $M$ objects with some order restrictions which are only for objects of type 0. Let $X$ represent the number of type 0 objects within the first
$l$ positions of this permutation where $l<M$. The law of $X$ for a given $j<l$ is given by the following formula:
\begin{equation}
    \label{prob-type0}
       \mathbb{P}\left(X=j\right) =  \binom{l}{j} \left(\prod_{i=0}^{j-1} (m_0-i)\right) \left(\prod_{i=0}^{l-j-1} (M-m_0-i) \right) 
       \frac{(M-l)!}{M!}.
\end{equation}
We further assume there exists a parameter $n$ such that $s,l=o(n^{1/4})$, and there are constants $h_i>0$ with $m_i=nh_i+\epsilon_i$ where $\sum_{i=1}^s|\epsilon_i|=o(n^{1/4})$ and $\max_{1\leq i\leq s}h_{s_i}=O(1)$. Under these conditions, the asymptotic probability is given by:


\begin{equation}
     \mathbb{P}\left(X=j\right)=\binom{l}{j}\frac{h_0^j(h_1+\cdots+h_s)^{l-j}}{(h_0+h_1+\cdots+h_s)^l}\left(1+o(n^{-1/2})\right),
     \label{prob-type0-asy}
\end{equation}
as $n\rightarrow\infty$. 
\label{common-lem1}
\end{lem}
\begin{proof}
To compute this probability we will count the number of permutations with $j$ elements of type 0 at the first $l$ place then divide it by the number of all permutations. Any given order restrictions on objects of type 0 make both the numerator and denominator of this fraction divided by the number of symmetries. As a result, we can assume there is no order restriction on objects of type 0.

The first part is straightforward by choosing those $j$ places at the first $l$ observations for objects of type 0 and $m_0 - j$  places in the remaining part. Next, by counting the number of desired arrangements for objects of type 0 and other types we arrive at the following formula
\[\mathbb{P}\left(X=j\right) = \frac{\binom{l}{j}\binom{M-l}{m_0 - j}m_0!(M-m_0)!}{M!}.\]

Now \eqref{prob-type0} can be derived through straightforward calculations.
For the second part, we first simplify $\frac{(M-l)!}{M!}$ to obtain
\[\binom{l}{j}\left(\frac{\prod_{i=0}^{j-1} (m_0-i) \prod_{i=0}^{l-j-1}(M-m_0-i)}{\prod_{i=0}^{l-1} (M-i)}\right).\]

Note that $m_0-i = h_0 n+o(n^{1/4})$ since $i\leq j < l = o(n^{1/4})$. Similarly, $M-m_0-i=(h_1+\cdots h_s)n+o(n^{1/4})$ and $M-i=(h_0+h_1+\cdots+h_s)n+o(n^{1/4})$ for each $i$. Substituting these expressions in the second term in the product above results in
\[\frac{\left(h_0 n + o(n^{1/4})\right) ^ j \left((h_1+\cdots +h_s)n+ o(n^{1/4})\right)^{l-j}}{\left((h_0+h_1+\cdots +h_s)n+ o(n^{1/4})\right)^{l}}\]
To derive \eqref{prob-type0-asy}, we factor out $n$ from each term within the parentheses and simplify further, using the fact that $j,l-j=o(n^{1/4})$
\begin{align*}
        \frac{\left(h_0  + o(n^{-3/4})\right) ^ j \left(h_1+\cdots +h_s+ o(n^{-3/4})\right)^{l-j}}{\left(h_0+h_1+\cdots +h_s+ o(n^{-3/4})\right)^{l}} & = \left( \frac{h_0^j(h_1+\cdots+h_s)^{l-j}}{(h_0+h_1+\cdots+h_s)^l}\right) \\
        & \quad \times \frac{\left(1 + \frac{o(n^{-3/4})}{h_0}\right) ^ j \left(1+ \frac{o(n^{-3/4})}{h_1+\cdots+h_s}\right)^{l-j}}{\left(1+ \frac{o(n^{-3/4})}{h_0+h_1+\cdots+h_s}\right)^{l}}\\
& = \left( \frac{h_0^j(h_1+\cdots+h_s)^{l-j}}{(h_0+h_1+\cdots+h_s)^l}\right) \left(1+o(n^{-3/4})\right)^{2l}\\
        & = \left( \frac{h_0^j(h_1+\cdots+h_s)^{l-j}}{(h_0+h_1+\cdots+h_s)^l}\right) \left(1+o(n^{-3/4})\right)^{o(n^{1/4})}\\
        & = \left( \frac{h_0^j(h_1+\cdots+h_s)^{l-j}}{(h_0+h_1+\cdots+h_s)^l}\right) \left(1+o(n^{-1/2})\right).
\end{align*}
This completes the proof.
\end{proof}

\begin{lem}
Under the same assumptions as Lemma \ref{common-lem1}, consider $M$ objects, each assigned a type from $0$ to $s$, with $m_i$ objects of type $i$ for each $0\leq i\leq s$. Let $X=(i_1,\cdots,i_s)$ be a random vector where $i_j$  represents the number of occurrences of objects of type $j$ before the first occurrence of objects of type 0 for $1\leq j\leq s$ in a uniformly random permutation. Then the law for $X$ is given by:
\begin{align}
    \mathbb{P}\left(X=(i_1,\cdots,i_s)\right) 
    & =   \frac{m_0\binom{m_1}{i_1}\cdots\binom{m_s}{i_s}(i_1+\cdots+i_s)!(M-i_1-\cdots-i_s-1)!}{M!}.
    \label{prob-vector}
\end{align}
Similar to Lemma \ref{common-lem1}, if we further assume $s=o(n^{1/4})$ and, $i_j<m_j=nh_j+\epsilon_j$, with $\sum_{j=1}^s|\epsilon_j|, \sum_{j=1}^s i_j=o(n^{1/4})$, then the probability \eqref{prob-vector} asymptotically approaches:
\[\frac{h_0 h_1 ^{i_1}\cdots h_s^{i_s} }{( h_0+h_1+\cdots+h_s)^{i_1+\cdots+i_s+1}}\cdot\frac{(i_1+i_2+\cdots+i_s)!}{i_1!i_2!\cdots i_s!} \left(1+o(n^{-1/2})\right),\]
as $n\rightarrow\infty$.
\label{common-lem2}
\end{lem}
\begin{proof}

 It is easy to check that the number of permutations of $M$ objects with exactly $i_j$ objects of type $j$ at the first $i_1+i_2+\cdots+i_s$ places for each $1\leq j\leq s$, and with the first object of type 0 at the $(i_1+\cdots+i_s+1)$-th place, is as follows
\[m_0\binom{m_1}{i_1}\cdots\binom{m_s}{i_s}(i_1+\cdots+i_s)!(M-i_1-\cdots-i_s-1)!.\]
Dividing it by the total number of permutations, i.e., $M!$ yields \eqref{prob-vector}.

Then, note that
\begin{align*}
    \binom{m_j}{i_j}& =\frac{m_j(m_j-1)\cdots(m_j-i_j+1)}{i_j!}\\
    & =\frac{\left(nh_j+o(n^{1/4})\right)^{i_j}}{i_j!}\\
    & =\frac{n^{i_j}h^{i_j}\left(1+o(n^{-3/4})\right)^{i_j}}{i_j!}.
\end{align*}
Moreover, using $s=o(n^{1/4})$, $\sum_{j=1}^s|\epsilon_j|=o(n^{1/4})$, and $\sum_{j=1}^s|i_j|=o(n^{1/4})$, we have
\begin{align*}
    \frac{(M-i_1-\cdots-i_s-1)!}{M!} &=\frac{1}{M(M-1)\cdots(M-i_1-\cdots i_s)} \\
    &=\frac{1}{\left((h_0+h_1+\cdots +h_s)n + o(n^{1/4})\right)^{i_1+\cdots+i_s+1}}\\
    & = \frac{1}{\left((h_0+\cdots+h_s)n\right)^{i_1+\cdots+i_s+1}}\cdot\frac{1}{(1+\frac{1}{h_0+\cdots+h_s}\cdot o(n^{-3/4}))^{i_1+\dots +i_s+1}}\\
    & = \frac{1}{(h_0+\cdots+h_s)^{i_1+\cdots+i_s+1}n^{i_1+\cdots+i_s+1}} \cdot \frac{1}{(1+o(n^{-3/4}))^{i_1+\dots +i_s+1}}.
\end{align*}
 We substitute the last two asymptotic expressions along with $m_0=h_0n(1+o(n^{-3/4}))$ into \eqref{prob-vector} to estimate the probability of the event in question as follows:
\begin{align*}
        \mathbb{P}\left(X=(i_1,\cdots,i_s)\right) & = \left(\frac{h_0 h_1^{i_1}\cdots h_s^{i_s}(i_1+\cdots+i_s)!}{( h_0+h_1+\cdots+h_s)^{i_1+\cdots+i_s+1}i_1!\cdots i_s!}\right)\frac{\left(1+o(n^{-3/4})\right)^{i_1+\cdots+i_s+1}}{\left(1+o(n^{-3/4})\right)^{i_1+\cdots+i_s+1}}\\
        & = \left(\frac{h_0 h_1^{i_1}\cdots h_s^{i_s}(i_1+\cdots+i_s)!}{( h_0+h_1+\cdots+h_s)^{i_1+\cdots+i_s+1}i_1!\cdots i_s!}\right)\left(1 + o(n^{-3/4})\right)^{2(i_1+\cdots+i_s+1)}\\
        & = \left(\frac{h_0 h_1^{i_1}\cdots h_s^{i_s}(i_1+\cdots+i_s)!}{( h_0+h_1+\cdots+h_s)^{i_1+\cdots+i_s+1}i_1!\cdots i_s!}\right)\left(1 + o(n^{-3/4})\right)^{o(n^{1/4})}\\
        & = \left(\frac{h_0 h_1^{i_1}\cdots h_s^{i_s}(i_1+\cdots+i_s)!}{( h_0+h_1+\cdots+h_s)^{i_1+\cdots+i_s+1}i_1!\cdots i_s!}\right)\left(1 + o(n^{-1/2})\right).
\end{align*}
This finishes the proof.
\end{proof}

Lemmas \ref{common-lem1} and \ref{common-lem2} relate certain statistics in the space of uniformly random permutations of a large set of objects, each assigned a specific type, to the binomial and negative multinomial distributions. Specifically, under certain assumptions on the number of types and objects of each type, Lemma \ref{common-lem1} establishes a relationship between the probability distribution of the number of occurrences of a particular type in the first $l$ positions of this permutation (for sufficiently small $l$) and a binomial distribution.  Additionally, Lemma \ref{common-lem2} links the number of occurrences of non-zero types before the first occurrence of type $0$ to a negative multinomial distribution. As mentioned earlier, this distributional convergence will help us greatly in our analysis. The special case required in this paper fixes all $h_i\equiv1$. Therefore, we state the following lemma.
\begin{lem}
Under the same assumptions of Lemma \ref{common-lem1}, there are $M$ objects, each assigned a type from $0$ to $s$, with $m_i$ objects of type $i$ for $0\leq i\leq s$. We similarly assume the existence of a parameter $n$ with $s=o(n^{1/4})$, $m_i=n+\epsilon_i$ so that $\sum_{j=1}^s|\epsilon_j|=o(n^{1/4})$. Additionally, let $l=o(n^{1/4})$ grow to infinity when $n\rightarrow\infty$ under the extra condition that $s=o(l)$.

Denoting by $X$ the number of objects of type $0$ within the first $l$ positions of this permutation, we have the following bound for its lower tail
\[\mathbb{P}\left( X\leq t \right) \leq \exp\left(\frac{-l}{s+1}+ t\log\left(\frac{l}{ts}\right)+ t + \log (t+1) \right)\big(1+o(n^{-1/2})\big),\] 
where $t\leq l/s$. In particular, if there are parameters $k,a,b$ with $k=o(n^{1/6})$ growing to infinity as $n\rightarrow\infty$, $a,b=O(1)$, and $s,t=\sqrt{k}+O(1)$ so that $l=(s-b)(k-s-a)$, for sufficiently large $n$, then we have
\[\mathbb{P}\left( X\leq t \right) \leq \exp\left(-k + \frac{1}{2}\sqrt{k}\log(k) + O(\sqrt{k})\right)\] 
\label{common-lem3}
\end{lem}
\begin{proof}
We can assume $k >s$ for sufficiently large $n$. It follows from $t\leq l/s$ that $\binom{l}{j} \frac{s^{l-j}}{(s+1)^l}$ is increasing for $0\leq j\leq t$. Using this fact and Lemma \ref{common-lem1} with $h_i=1$ we have
\begin{align*}
    \mathbb{P}\left( X \leq  t \right) = & \sum_{j=0}^{t} \binom{l}{j} \frac{s^{l-j}}{(s+1)^l} (1+o(n^{-1/2})) \\
    \leq & (t+1) \binom{l}{t} \frac{s^{l-t}}{(s+1)^l}(1+o(n^{-1/2}))\\
    = & \frac{t+1}{s^{t}} \binom{l}{t}\left(1-\frac{1}{s+1}\right)^l(1+o(n^{-1/2}))\\
    = & \frac{t+1}{s^{t}} \binom{l}{t}\left(\Big(1-\frac{1}{s+1}\Big)^{s+1}\right)^{l/(s+1)}(1+o(n^{-1/2}))\\
    \leq &  \frac{t+1}{s^{t}} \binom{l}{t}\exp\left(\frac{-l}{s+1}\right)(1+o(n^{-1/2}))\\
    \leq & \frac{t+1}{s^{t}} \cdot \frac{l^t}{t!}\exp\left(\frac{-l}{s+1}\right)(1+o(n^{-1/2})).
\end{align*}
One can use the bound from Stirling's approximation, i.e., $t!>(t/e)^t$, to get

\begin{align*}
    \mathbb{P}\left( X \leq t \right)  \leq & \frac{(t+1)e^t l^t}{s^{t} t^t} \exp\left(\frac{-l}{s+1}\right)(1+o(n^{-1/2}))\\
    = & \exp\left(\frac{-l}{s+1}+ t\log\left(\frac{l}{ts}\right)+ t +  \log(t+1) \right)(1+o(n^{-1/2})).
\end{align*}

By substituting $s,t=\sqrt{k}+O(1)$ and $l=(s-b)(k-s-a)=k\sqrt{k}+O(k)$ we obtain
\begin{align*}
    \frac{-l}{s+1}+t+\log (t+1)& =\frac{-k\sqrt{k}+O(k)}{\sqrt{k}+O(1)}+2\sqrt{k}+O(\log k)\\
    & = -k+O(\sqrt{k}).\\ 
\end{align*}
Additionally,

\begin{align*}
    t\log\left(\frac{l}{ts}\right) & =\left(\sqrt{k}+O(1)\right)\log\left(\frac{k\sqrt{k}+O(k) 
    }{k + O(\sqrt{k})} \right)\\
    & = \left(\sqrt{k}+O(1)\right)\log\left(\sqrt{k}+O(1) \right)\\
    & = \frac{1}{2}\sqrt{k}\log(k) + O(\sqrt{k}).
\end{align*}

This completes the proof.
\end{proof}
\section{Disconnectedness of $\mathit{LK}(n,k)$ Graphs for $t<1$}\label{sec:disconnectivity}
This section begins by recalling the negative multinomial distribution. In the special case that we need here, we present an urn model to interpret the distribution. Consider a sequence of \emph{i.i.d.} random variables $(X_i)_{i=1}^{\infty}$, each taking $k+1$ possible values $\{0,1,\cdots,k\}$ uniformly at random. We refer to these outcomes as the type of objects, resulting in $k+1$ distinct types. This sequence continues until the first occurrence of an object of type $0$. The probability of observing $i_1$ objects of type 1, $i_2$ objects of type 2, ... , $i_k$ objects of type $k$, before stopping follows the negative multinomial distribution, as described below
\begin{equation}
 f(i_1,i_2,\cdots, i_k)=\frac{1}{(k+1)^{i_1+i_2+\cdots+i_k+1}}\cdot\frac{(i_1+i_2+\cdots+i_k)!}{i_1!i_2!\cdots i_k!},
 \label{negative-multi}
\end{equation}
where $i_j\geq 0$ for $1\leq j\leq k$. We now introduce some Lemmas in order to bound certain probabilities that will appear in the proof.
\begin{lem}
Let $k,n$ be positive integers. With definition (\ref{negative-multi}), we have
\[\mathlarger{\sum}_{\substack{i_1+i_2+\cdots+i_k=n-1\\\forall j: i_j\geq 0}} f(i_1,i_2,\cdots, i_k)=\frac{1}{k+1}\left(\frac{k}{k+1}\right)^{n-1}.\]
\label{binom}
\end{lem}
\begin{proof}
In the urn model under consideration, i.e, $(X_i)_{i=1}^{\infty}$, the first appearance of 0 follows a geometric random variable with parameter $\frac{1}{k+1}$. Therefore, the right-hand side is the probability that this random variable is $n$. The left-hand side sums over the probabilities of all possible numbers of other types observed prior to the $n$-th draw.
\end{proof}


\begin{lem}
Let $k$ be a positive integer and let $0<\delta<1$. Then: 
\[\mathlarger{\sum}_{\substack{k^2(1+\delta)<i_1+i_2+\cdots+i_k<4(k+1)^2 \\ \forall j: i_j\geq 0}} f(i_1,i_2,\cdots, i_k)\geq \mathlarger{\sum}_{\substack{k^2(1+\delta)<i_1+i_2+\cdots+i_k \\ \forall j: i_j\geq 0}} f(i_1,i_2,\cdots, i_k) \; - \; e^{-4(k+1)}.\]
\label{remind}
\end{lem}
\begin{proof}
We will start by bounding the term below which we denote by $r$. We have
\begin{align*}
        r & =  \mathlarger{\sum}_{\substack{k^2(1+\delta)<i_1+i_2+\cdots+i_k \\ \forall j: i_j\geq 0}} f(i_1,i_2,\cdots, i_k) - \mathlarger{\sum}_{\substack{k^2(1+\delta)<i_1+i_2+\cdots+i_k \leq 4(k+1)^2 \\ \forall j: i_j\geq 0}} f(i_1,i_2,\cdots, i_k)\\
        & = \mathlarger{\sum}_{\substack{4(k+1)^2<i_1+i_2+\cdots+i_k \\ \forall j: i_j\geq 0}} f(i_1,i_2,\cdots, i_k)\\
        & = \sum_{n = 4(k+1)^2}^{\infty} \frac{1}{k+1}\left(\frac{k}{k+1}\right)^n\quad \textrm{(Using Lemma \ref{binom})}\\
        & = \left(\frac{k}{k+1}\right)^{4(k+1)^2}\\
        & = \left(\left(1 - \frac{1}{k+1}\right)^{(k+1)}\right)^{4(k+1)}\\
        & \leq e^{-4(k+1)},
\end{align*}
and the result follows.
\end{proof}

\begin{lem}Let $k$ be a positive integer and $0<\delta<1$. Then:
\begin{align}
\begin{split}
    & \mathlarger{\sum}_{\substack{k^2(1+\delta)<i_1+i_2+\cdots+i_k<4(k+1)^2 \\ \forall j,\, i_j\geq k}}f(i_1,i_2,\cdots, i_k) \\ & \qquad \qquad \qquad \qquad \geq (1 - k e^{\frac{-k\delta^2}{5}}) \mathlarger{\sum}_{\substack{k^2(1+\delta)<i_1+i_2+\cdots+i_k<4(k+1)^2 \\ \forall j,\, i_j\geq 0}}f(i_1,i_2,\cdots, i_k).
\end{split}\label{eq:bern}
\end{align}
\label{bern}
\end{lem}
Before proving the result we point out the subtle difference in the summations on both sides of \eqref{eq:bern}---the LHS considers $i_j \geq k$, and the RHS considers $i_j \geq 0$, and so, the RHS summation is larger than the LHS summation. Hence, the result is non-trivial.
\begin{proof}
Let $N$ denote the step in which we first observe a type 0 object in the sequence $(X_i)_{i=1}^{\infty}$. In other words, $X_N=0$ and for all $i< N$, $X_i\neq 0$. Moreover, we let $I_j$ be the number of occurrences of objects of type $j$ before step $N$. At each step prior to the $N$-th observation, whether an outcome is of type $m$ or not, i.e., $X_j=m$, has a Bernoulli distribution with parameter $\tfrac{1}{k}$. Denote this random variable by $X_j^{(m)}$ where $j$ refers to the step number. For an arbitrary $1\leq m \leq k$, we have $\mathbb{E}[X_j^{(m)}|j<N]=\tfrac{1}{k}$ and $\mathrm{Var}[X_j^{(m)}|j<N]=\tfrac{k-1}{k^2}$. If we assume $N=n>k^2(1+\delta)$, then the assumption $I_m<k$ implies
\[\frac{X_1^{(m)}+\cdots+X_{n-1}^{(m)}}{n-1}<\frac{k}{k^2(1+\delta)}=\frac{1}{k(1+\delta)}.\]
However, the expected value of the time average above given $N=n$ is $\frac{1}{k}$. Hence, the difference $\tfrac{1}{k(1+\delta)}-\tfrac{1}{k}=-\tfrac{\delta}{k(1+\delta)}$, indicates a lower tail event. To analyze this event, we apply the Bernstein inequality, \cite[Equation (2.10)]{boucheronlugosimassartbook2013}, for $n>k^2(1+\delta)$:
\begin{align*}
    \mathbb{P}\left(I_m<k|N=n\right)  & \leq \mathbb{P}\left(\frac{X_1^{(m)}+\cdots+X_{n-1}^{(m)}}{n-1} < \frac{1}{k(1+\delta)}\; \middle |\; N=n\right)\\
      & \leq\exp\left(\frac{-(n-1)\times \frac{\delta^2}{k^2(1+\delta)^2}}{\frac{2(k-1)}{k^2}+\frac{2\delta}{3k(\delta+1)}}\right) \\
       & \leq\exp\left(\frac{-k^2(1+\delta)\times \frac{\delta^2}{k^2(1+\delta)^2}}{\frac{2(k-1)}{k^2}+\frac{2\delta}{3k(\delta+1)}}\right) \\
       & = \exp\left(\frac{-3k^2\delta^2}{6(k-1)(\delta+1) + 2\delta k}\right)\\
       & \leq \exp\left(\frac{-k\delta^2}{5}\right).
\end{align*}
Using the union bound yields 
\[\mathbb{P}\left(\forall j:I_j\geq k\bigg|N=n\right) \geq 1 - k\exp\left(\frac{-k\delta^2}{5}\right),\]
for $n>k^2(1+\delta)$. Therefore, we can apply this bound and definition \eqref{negative-multi} to obtain
\begin{align*}
    & \mathlarger{\sum}_{\substack{k^2(1+\delta)<i_1+i_2+\cdots+i_k<4(k+1)^2 \\ \forall j,\, i_j\geq k}}f(i_1,i_2,\cdots, i_k) \\ & = 
      \mathlarger{\sum}_{ k^2(1+\delta)< n < 4(k+1)^2}\;\mathlarger{\sum}_{\substack{\sum i_m=n-1,\\ \forall j,\, i_j\geq k}}f(i_1,i_2,\cdots, i_k) \\ 
    & = 
     \mathlarger{\sum}_{ k^2(1+\delta)< n < 4(k+1)^2}\mathbb{P}\left(\forall j:I_j\geq k, N=n\right) \\ 
     & = 
     \mathlarger{\sum}_{ k^2(1+\delta)< n < 4(k+1)^2}\mathbb{P}\left(N=n\right)\mathbb{P}\left(\forall j:I_j\geq k\big| N=n\right) \\ 
     & \geq \left(1 - k\exp\left(\frac{-k\delta^2}{5}\right)\right) \mathlarger{\sum}_{ k^2(1+\delta)< n < 4(k+1)^2}\mathbb{P}\left(N=n\right)\\
    & = \left(1 - k\exp\left(\frac{-k\delta^2}{5}\right)\right)\mathlarger{\sum}_{\substack{k^2(1+\delta)<i_1+i_2+\cdots+i_k<4(k+1)^2 \\ \forall j,\, i_j\geq 0}}f(i_1,i_2,\cdots, i_k).
\end{align*}
This completes the proof.
\end{proof}
\begin{lem}
Let $k$ be a positive integer, and let $0\leq \delta\leq 1$ depend on $k$ such that for sufficiently large $k$ we have $\delta\geq \sqrt{\tfrac{6\log(k)}{k}}$. Then, the following asymptotic relationship holds 
\begin{equation}\label{eq:fbound}
\mathlarger{\sum}_{\substack{k^2(1+\delta)<i_1+i_2+\cdots+i_k<4(k+1)^2 \\ \forall j,\, i_j\geq k}}f(i_1,i_2,\cdots, i_k) = \exp\left( \left(-k + \frac{1}{2}\right)(1+\delta) + o(1) \right), \end{equation}
as $k\rightarrow\infty$.
\label{infinite-exp}
\end{lem}
\begin{proof}
First note that Lemma \ref{binom} and the second order approximation $\log(1 - \tfrac{1}{k+1})=-\tfrac{1}{k+1}-\tfrac{1}{2(k+1)^2}+\Theta(\tfrac{1}{k^3})$ imply
\begin{align}
   \mathlarger{\sum}_{\substack{k^2(1+\delta)<i_1+i_2+\cdots+i_k \\ \forall j,\, i_j\geq 0}}f(i_1,i_2,\cdots, i_k)  & = \sum_{N=k^2(1+\delta)+1}^{\infty}\frac{1}{k+1}\left(\frac{k}{k+1}\right)^{N-1}\notag \\
    & = \left(1 - \frac{1}{k+1}\right)^{k^2(1+\delta)}\notag \\
    & = \exp\left(\left(-\frac{1}{k+1}-\frac{1}{2(k+1)^2}+\Theta\left(\frac{1}{k^3}\right)\right)k^2(1+\delta)\right)\notag \\
    & = \exp\left(\left(-k + \frac{1}{2} + \Theta\left(\frac{1}{k}\right) \right) (1 + \delta) \right)\notag \\
    & = \exp\left( (-k + 0.5)(1+\delta) + \Theta\left(\frac{1}{k}\right) \right).
    \label{negative-multi-sum}
\end{align}

We can now simply upper bound the left-hand side of \eqref{eq:fbound} using \eqref{negative-multi-sum} as follows:
\begin{align}
    & \mathlarger{\sum}_{\substack{k^2(1+\delta)<i_1+i_2+\cdots+i_k<4(k+1)^2 \\ \forall j,\, i_j\geq k}}f(i_1,i_2,\cdots, i_k)  \notag \\
   & \leq 
   \mathlarger{\sum}_{\substack{k^2(1+\delta)<i_1+i_2+\cdots+i_k \\ \forall j,\, i_j\geq 0}}f(i_1,i_2,\cdots, i_k) \notag \\ 
   & = \exp\left( (-k + 0.5)(1+\delta) + \Theta\left(\frac{1}{k}\right) \right) \notag\\
   & = \exp\left( (-k + 0.5)(1+\delta) + o(1) \right). 
   \label{negative-multi-sum-side1}
\end{align}

We next use Lemmas \ref{bern}, \ref{remind}, and \eqref{negative-multi-sum} to lower bound the left-hand side:
\begin{align*}
       & \mathlarger{\sum}_{\substack{k^2(1+\delta)<i_1+i_2+\cdots+i_k<4(k+1)^2 \\ \forall j,\, i_j\geq k}}f(i_1,i_2,\cdots, i_k)  \\
       & \geq 
        \Big(1 - k e^{\frac{-k\delta^2}{5}}\Big) \mathlarger{\sum}_{\substack{k^2(1+\delta)<i_1+i_2+\cdots+i_k<4(k+1)^2 \\ \forall j,\, i_j\geq 0}}f(i_1,i_2,\cdots, i_k)\\ 
    & \geq
        \Big(1 - ke^{\frac{-k\delta^2}{5}}\Big)\left(\mathlarger{\sum}_{\substack{k^2(1+\delta)<i_1+i_2+\cdots+i_k \\ \forall j,\, i_j\geq 0}}f(i_1,i_2,\cdots, i_k)- e^{-4(k+1)}\right) \\
        & = 
        \Big(1 - ke^{\frac{-k\delta^2}{5}}\Big)\left(e^{ (-k + \frac{1}{2})(1+\delta) + \Theta(\frac{1}{k})}-e^{-4(k+1)}\right)\\ 
        & = e^{ (-k + \frac{1}{2})(1+\delta) + \Theta(\frac{1}{k})}\Big(1 - ke^{\frac{-k\delta^2}{5}}\Big)\Big(1 - e^{(\delta-3)k+(\frac{-7+\delta}{2})+\Theta(\frac{1}{k})}\Big)
\end{align*}
The bounds $\sqrt{\frac{6\log k}{k}}\leq \delta \leq 1$ yield $ke^{\frac{-k\delta^2}{5}}\leq \tfrac{1}{\sqrt[5]{k}}\rightarrow 0$, and $e^{(\delta-3)k+(\frac{-7+\delta}{2})+\Theta(\frac{1}{k})}\rightarrow 0$ as $k\rightarrow\infty$. Therefore, we can replace $(1 - ke^{\frac{-k\delta^2}{5}})$ and $(1 - e^{(\delta-3)k+(\frac{-7+\delta}{2})+\Theta(\frac{1}{k})})$ by $e^{o(1)}$ to deduce the following lower bound
\begin{equation}
  \mathlarger{\sum}_{\substack{k^2(1+\delta)<i_1+i_2+\cdots+i_k<4(k+1)^2 \\ \forall j,\, i_j\geq k}}f(i_1,i_2,\cdots, i_k) \geq \exp\left( (-k + 0.5)(1+\delta) + o(1) \right).
    \label{negative-multi-sum-side2}
\end{equation}
Now \eqref{negative-multi-sum-side1} and \eqref{negative-multi-sum-side2} complete the proof.

\end{proof}
We are now in a position to relate the lemmas above to the disconnectedness result. Assume $I_j$ represents the event of vertex $j$ being isolated. We will show that with high probability there exists a $j$ such that $I_j=1$.

\begin{lem}
For a graph of class $LK(n,k)$ when $k=o(\sqrt[8]{n})$ and $k\rightarrow\infty$, we have
\[\mathbb{P}(I_1 = 1)\geq\frac{1}{2}\exp\big( (-k + 0.5)(1+\delta) + o(1) \big), \]
for any $\delta=\delta(k)$ with $ \sqrt{\tfrac{6\log(k)}{k}}\leq \delta(k) <1$.
\label{prob-iso-lower-bound}
\end{lem}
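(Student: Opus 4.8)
The plan is to exhibit an explicit sub-event of $\{I_1=1\}$ whose probability is, in the limit, the truncated negative-multinomial sum that Lemma~\ref{infinite-exp} bounds from below, and then to invoke Lemmas~\ref{common-lem2} and \ref{infinite-exp}. First I would record the characterization of isolation and reduce to a conditional statement: vertex $1$ proposes exactly the edges $(1,R_1^1),\dots,(1,R_1^k)$, so $I_1=1$ if and only if for every $\ell\in\{1,\dots,k\}$ one has $1\notin\mathcal{R}_{R_1^\ell}^{\leq k}$, i.e.\ the edge $(R_1^\ell,1)$ is outranked by at least $k$ other edges incident to $R_1^\ell$. Relabelling $\{2,\dots,n\}$ leaves both $\{I_1=1\}$ and the law of the scores invariant, so $\mathbb{P}\{I_1=1\}=\mathbb{P}\{I_1=1\mid\mathcal{R}_1=\rho\}$ for any fixed permutation $\rho$ of $[n]\setminus\{1\}$; I fix such a $\rho$, write $j_\ell:=R_1^\ell$, and work conditionally on $\mathcal{R}_1=\rho$, which fixes the identities $j_1,\dots,j_k$ and makes $(1,j_1)$ the highest-scored edge of vertex $1$.

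Next I would set up the urn model of Lemma~\ref{common-lem2} with all $h_i=1$. Take as objects the $M=(n-1)+k(n-1-k)$ edges consisting of type $0$, the $m_0=n-1$ edges incident to vertex $1$, and, for $\ell=1,\dots,k$, type $\ell$, the $m_\ell=n-1-k$ ``private'' edges $(j_\ell,m)$ with $m\in[n]\setminus\{1,j_1,\dots,j_k\}$. The induced order on these $M$ edges is uniform, and conditioning on $\mathcal{R}_1=\rho$ imposes order restrictions only among the type-$0$ objects, so Lemma~\ref{common-lem2} applies (here $s=k=o(n^{1/4})$ and every $m_i=n+o(n^{1/4})$). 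Reading the permutation in decreasing order of score, let $i_\ell$ count the type-$\ell$ objects preceding the first type-$0$ object, which is $(1,j_1)$; that is, $i_\ell=\#\{m\notin\{1,j_1,\dots,j_k\}:V(j_\ell,m)>V(1,j_1)\}$. The key inclusion is
\[\{\,i_\ell\geq k\ \text{for all }\ell\,\}\ \subseteq\ \{I_1=1\},\]
because if $i_\ell\geq k$ then $j_\ell$ has at least $k$ edges with score exceeding $V(1,j_1)\geq V(1,j_\ell)$, so $(j_\ell,1)$ has rank $>k$ at $j_\ell$ and $1\notin\mathcal{R}_{j_\ell}^{\leq k}$; as this holds for all $\ell$ and vertex $1$ proposes only $(1,j_1),\dots,(1,j_k)$, vertex $1$ is isolated.

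Finally I would estimate $\mathbb{P}\{\forall\ell:i_\ell\geq k\}$. Since $k=o(n^{1/8})$, every tuple with $\sum_\ell i_\ell<4(k+1)^2$ has all $i_\ell=o(n^{1/4})$, so Lemma~\ref{common-lem2} gives that $(i_1,\dots,i_k)$ has, up to a multiplicative $(1+o(1))$ that is uniform over this polynomially small range (it depends on the tuple only through $\sum_\ell i_\ell+1$), the negative-multinomial density $f$ of~(\ref{negative-multi}); concretely,
\begin{align*}
\mathbb{P}\{I_1=1\} \;&\geq\; \mathbb{P}\Bigl\{\,i_\ell\geq k\ \forall\ell,\ \ k^2(1+\delta)<\textstyle\sum_{\ell=1}^{k} i_\ell<4(k+1)^2\,\Bigr\}\\
&=\; (1+o(1))\sum_{\substack{k^2(1+\delta)<i_1+\cdots+i_k<4(k+1)^2\\ i_j\geq k\ \forall j}} f(i_1,\dots,i_k)\\
&\geq\; O\!\left(e^{-k(1+\delta)}\right),
\end{align*}
where the last step is Lemma~\ref{infinite-exp}, whose hypothesis $\delta\geq\sqrt{6\log k/k}$ is exactly the one assumed here.

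The only non-mechanical step is the choice of sub-event: testing each $j_\ell$'s private edges against the \emph{single} common threshold $V(1,j_1)$, rather than against the individual thresholds $V(1,j_\ell)$, is what simultaneously forces vertex $1$ to be isolated and turns the relevant count into an exactly negative-multinomial vector, so that Lemmas~\ref{common-lem2} and \ref{infinite-exp} plug in directly. Everything after that is parameter bookkeeping (checking $k=o(n^{1/8})\Rightarrow 4(k+1)^2=o(n^{1/4})$ for the uniformity of the approximation) and the two cited lemmas.
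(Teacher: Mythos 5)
Your proposal is correct and follows essentially the same route as the paper: the same strengthened sub-event (each of the $k$ preferred neighbors has at least $k$ of its ``private'' edges outscoring the top edge of vertex $1$), the same urn/type setup feeding Lemma~\ref{common-lem2}, and the same truncation $k^2(1+\delta)<\sum i_\ell<4(k+1)^2$ handled by Lemma~\ref{infinite-exp}. The only cosmetic difference is that you condition on $\mathcal{R}_1=\rho$ by exchangeability where the paper fixes the labeling ``without loss of generality,'' which is the same reduction.
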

\begin{proof}
As mentioned earlier, any realization of independent random variables $\{V(i,j)\}_{i,j}$ can be regarded as yielding a permutation of all edges $\mathcal{E}$. Without loss of generality, one can assume vertex labels are sorted according to the scores of edges incident to vertex 1. In other words, $V(1,2)>V(1,3)>\cdots>V(1,n)$, or equivalently $R_1^j=j+1$ for $1\leq j\leq n-1$. Then, for vertex 1 to be isolated, it is sufficient that for each $j\in\{2,3,\cdots,k+1\}$, there exist at least $k$ elements of $\mathcal{E}_j$ (except $(1,j)$) appearing before all elements of $\mathcal{E}_1$ in the permutation. Strictly speaking, if $k$ elements of $\mathcal{E}_j$ appear before $(1,j+1)$, then that would be necessary and sufficient. However, we consider a stricter condition insisting that $k$ elements of $\mathcal{E}_j$ appear before all elements of $\mathcal{E}_1$. This ensures that vertex $j$ does not agree on edges $(1,j)$ for $2\leq j \leq k+1$, and vertex $1$ similarly does not agree upon edges $(1,j)$ for $k+2 \leq j \leq n$, thereby isolating vertex 1.

To make this condition even stronger, we insist on no intersections. Define for each $2\leq i\leq k+1$: $\mathcal{E'}_i=\mathcal{E}_i \setminus \{ (i,1) , \cdots, (i,k+1)\}$. As a result, $\{\mathcal{E}_1,\mathcal{E}'_2,\mathcal{E}'_3,\cdots,\mathcal{E}'_{k+1}\}$ is a pairwise disjoint collection with $|\mathcal{E}_1|=n-1, |\mathcal{E}'_i|=n-k-1$. Note that we only determined the order over $\mathcal{E}_1$. Thus, any realization of edge scores induces a uniformly random permutation on $\mathcal{E}_1\cup    \mathcal{E'}_2\cup\cdots\cup\mathcal{E'}_{k+2}$ with the order restriction  $(1,2) \succ (1,3)\succ\cdots\succ (1,n)$ on $\mathcal{E}_1$. 

Let $\mathcal{E}_1$ be objects of type 0 and $\mathcal{E'}_i$ be objects of type $i-1$ for $2\leq i\leq k+2$. This satisfies all the conditions of Lemma \ref{common-lem2}. Therefore, the probability of having $i_j$ elements of type $j$ before the first observation of type 0 when $k^2(\delta + 1) < i_1+\cdots+i_k < 4(k+1)^2=o(n^{1/4})$ can be approximated by the distribution $f(\cdot)$ from  \eqref{negative-multi}. Hence, for sufficiently large $n$ we have: 
\[\mathbb{P}\left(X=(i_1,\cdots,i_k)\right)\geq \frac{1}{2} f(i_1,\cdots,i_k)\]
From what we explained earlier, the additional conditions $i_j\geq k$ for $1\leq j \leq k$ ensure that vertex 1 is isolated. Applying Lemma \ref{infinite-exp} then allows us to conclude that

\[\mathbb{P}(I_1 = 1)\geq\frac{1}{2}\exp\left( (-k + 0.5)(1+\delta) + o(1) \right), \]

which completes the proof.
\end{proof}

\begin{lem}
For an $\mathit{LK}(n,k)$ graph where $k \leq  \log(n) - 3\sqrt{\log(n) \log\log(n)}$, we have
\[\mathbb{E}[\textit{Number of isolated vertices}]\rightarrow \infty\]
In particular, if $k=\lfloor t\log(n) \rfloor $ for $t<1$, the statement is true. 
\label{lem-second-method-1}
\end{lem}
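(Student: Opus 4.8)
The plan is to combine linearity of expectation with the isolation-probability bound of Lemma~\ref{prob-iso-lower-bound}. Since the vertex labels in $\mathbb{G}(n,k)$ are exchangeable, $\mathbb{E}\{\#\text{isolated vertices}\} = n\,\mathbb{P}\{I_1=1\}$, so it suffices to show $n\,\mathbb{P}\{I_1=1\}\to\infty$ as $n\to\infty$.

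First I would record a monotonicity fact that lets us reduce to the largest admissible value of $k$. Coupling two copies of the model on a common realization of the priority scores $\{V(i,j)\}$, increasing $k$ only enlarges each vertex's set of proposed edges, hence only enlarges the edge set of the resulting graph; consequently the event that vertex $1$ is isolated at parameter $k+1$ is contained in the corresponding event at parameter $k$, so $k\mapsto\mathbb{P}\{I_1=1\}$ is non-increasing. Writing $k^\ast := \lfloor \log n - 3\sqrt{\log n\log\log n}\rfloor$, for any $k\le k^\ast$ we then have $\mathbb{P}_k\{I_1=1\}\ge \mathbb{P}_{k^\ast}\{I_1=1\}$, and it is enough to prove the claim for $k=k^\ast$. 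Note $k^\ast\to\infty$ and $k^\ast\le\log n = o(n^{1/8})$, so Lemma~\ref{prob-iso-lower-bound} indeed applies to $\mathbb{G}(n,k^\ast)$.

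Next I would take $\delta = \delta(k^\ast) = \sqrt{6\log k^\ast/k^\ast}$, which lies in $[\sqrt{6\log k^\ast/k^\ast},\,1)$ for all large $n$, and invoke Lemma~\ref{prob-iso-lower-bound} to obtain a constant $c>0$ with $\mathbb{P}_{k^\ast}\{I_1=1\}\ge c\,e^{-k^\ast(1+\delta)}$ for $n$ large. Then $n\,\mathbb{P}_{k^\ast}\{I_1=1\}\ge c\,\exp\!\big(\log n - k^\ast - k^\ast\delta\big)$, so it remains to show $\log n - k^\ast - k^\ast\delta\to\infty$. Here $k^\ast\delta = \sqrt{6\,k^\ast\log k^\ast}\le\sqrt{6\log n\,\log\log n}$ since $k^\ast\le\log n$ forces $\log k^\ast\le\log\log n$; and by the definition of $k^\ast$ we have $\log n - k^\ast \ge 3\sqrt{\log n\log\log n} - O(1)$. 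As $\sqrt{6}<3$, this yields $\log n - k^\ast - k^\ast\delta \ge (3-\sqrt{6})\sqrt{\log n\log\log n} - O(1)\to\infty$, hence $\mathbb{E}\{\#\text{isolated vertices}\}\to\infty$. For the stated special case $k=\lfloor t\log n\rfloor$ with $0<t<1$, since $(1-t)\log n$ eventually dominates $3\sqrt{\log n\log\log n}$, we get $k\le t\log n = \log n - (1-t)\log n \le \log n - 3\sqrt{\log n\log\log n}$ for all large $n$, so the hypothesis is met.

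The main obstacle I anticipate is essentially bookkeeping: making the exponent estimate $\log n - k^\ast(1+\delta)\to\infty$ robust to the floor function and to the precise reading of the ``$O$'' in Lemma~\ref{prob-iso-lower-bound} (i.e.\ fixing the constant $c>0$ and the threshold on $n$ beyond which the bound is valid). The monotonicity reduction is needed only to cover sequences $k(n)$ that do not tend to infinity; an alternative is to observe that for bounded $k$ the isolation probability is bounded below by a positive constant, which makes $n\,\mathbb{P}\{I_1=1\}\to\infty$ immediate and so reduces everything to the case $k\to\infty$ handled above.
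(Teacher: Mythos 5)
Your proposal is correct and takes essentially the same route as the paper's proof: exchangeability and linearity of expectation reduce the claim to $n\,\mathbb{P}\{I_1=1\}\to\infty$, Lemma~\ref{prob-iso-lower-bound} is invoked with the same choice $\delta=\sqrt{6\log(k)/k}$, and the same arithmetic yields the lower bound $\exp\big((3-\sqrt{6})\sqrt{\log(n)\log\log(n)}\big)$ up to constants. Your only addition is the monotone-coupling reduction to $k^\ast$ (or the remark about bounded $k$), which tidies up the case where $k(n)$ does not tend to infinity -- a point the paper glosses over with ``for sufficiently large $n$ and $k$'' -- but the substance of the argument is identical.
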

\begin{proof}
Let $\delta = \sqrt{\tfrac{6\log(k)}{k}}$, according to Lemma \ref{prob-iso-lower-bound} for sufficiently large $n$ and $k$, we have
\begin{align*}
    \mathbb{E}[\textit{Number of isolated vertices}] & = \mathbb{E}\left[\sum_{i=1}^n I_i\right]\\
    & = n\mathbb{E}[I_1]\\
    & = n\mathbb{P}(I_1=1)\\
    & \geq \frac{n}{2}\exp\left( (-k + 0.5)(1+\delta) + o(1) \right)\\
    & = \exp\left(\log(n) - k - \sqrt{6k\log(k)}+\frac{1}{2}-\log 2 + o(1)\right)\\
    & \geq  \exp\left(3\sqrt{\log(n) \log\log(n)} - \sqrt{6k\log(k)}\right) \ (\textrm{For sufficiently large } n)\\
    & \geq \exp\left(\left(3-\sqrt{6}\right)\left(\sqrt{\log(n)\log\log(n)}\right)\right),
\end{align*}
wherein the right hand side goes to infinity as $n\rightarrow\infty$.
\end{proof}

To establish the desired result that there is at least one isolated vertex with high probability, it is insufficient to merely show that the mean of a non-negative random variable is unbounded, as this does not directly imply that the probability of it being zero vanishes. Therefore, we utilize the second moment method \cite{alon2016probabilistic}, requiring the analysis of the correlation between pairs of indicator random variables that represent vertex isolation. Let us consider two arbitrary vertices, without loss of generality, vertices 1 and 2. Our goal is to study $\mathbb{P}(I_1=1, I_2=1)=\mathbb{P}(I_1 I_2=1)=\mathbb{E}[I_1 I_2]$. To proceed, we define the following events:

\begin{itemize}
    \item $B_{1;2}:=\{2\in\mathcal{R}^{\leq k}_1\}$, representing the event in which vertex 2 belongs to the $k$-most-preferred set of vertex 1.
    \item $B_{2;1}:=\{1\in\mathcal{R}^{\leq k}_2\}$,  representing the event in which vertex 1 belongs to the $k$-most-preferred set of vertex 2.
    \item $B_{1\cup\,2}:=\{\mathcal{R}^{\leq k}_1\cap\mathcal{R}^{\leq k}_2\neq \phi\}$, identifying the event where the $k$-most-preferred elements of vertices 1 and 2 have intersections. 
    \item  $B_{1\rightarrow2}:=\{\exists i\in\mathcal{R}^{\leq k}_1 \textrm{ such that } (\mathcal{R}^{\leq k}_2 \cup \{2\}) \cap \mathcal{R}^{\leq k}_i \neq \phi\}$. This represents the event where there exists a vertex $i$ among the $k$-most-preferred set of vertex 1 such that 2 or one of $k$-most-preferred set vertices of it belongs to the $k$-most-preferred vertices of $i$. 
    \item $B_{2\rightarrow1}:=\{\exists i\in\mathcal{R}^{\leq k}_2 \textrm{ such that }  (\mathcal{R}^{\leq k}_1 \cup \{1\}) \cap \mathcal{R}^{\leq k}_i \neq \phi\}$. This represents the event where there exists a vertex $i$ among the $k$-most-preferred set of vertex 2 such that 1 or one of $k$-most-preferred set vertices of it belongs to the $k$-most-preferred vertices of $i$. 
    \item $B := B_{1;2}\cup B_{2;1} \cup B_{1\cup\, 2} \cup  B_{1\rightarrow 2} \cup B_{2\rightarrow 1}.$
\end{itemize}

For ease of presentation, we omit the value $1$ from the events $\{I_1=1\}$, $\{I_2=1\}$ or $\{I_1I_2=1\}$. For example, we will use the shorthand $\mathbb{P}(I_1I_2):=\mathbb{P}(I_1I_2=1)$. Furthermore, we assume $k<\log(n)$ for the remainder of this section. We start by bounding the event $B$.

\begin{lem}\label{bound-B}
    We have the following bound for $B$
    \[\mathbb{P}(B)\leq  O(k^3/n).\]
\end{lem}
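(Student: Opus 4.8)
The plan is to apply a union bound over the five events whose union is $B$ and to show that $\mathbb{P}\{B_{1;2}\},\mathbb{P}\{B_{2;1}\}=O(k/n)$, that $\mathbb{P}\{B_{1\cup 2}\}=O(k^2/n)$, and that $\mathbb{P}\{B_{1\to 2}\},\mathbb{P}\{B_{2\to 1}\}=O(k^3/n)$; the last two terms dominate and give the stated bound.

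The first two are immediate: $\mathcal{R}_1$ is a uniformly random permutation of $[n]\setminus\{1\}$, so $\mathbb{P}\{B_{1;2}\}=\mathbb{P}\{2\in\mathcal{R}_1^{\le k}\}=k/(n-1)$, and symmetrically for $B_{2;1}$. For $B_{1\cup 2}$ I would use linearity of expectation over the overlap, $\mathbb{P}\{B_{1\cup 2}\}\le\mathbb{E}\,|\mathcal{R}_1^{\le k}\cap\mathcal{R}_2^{\le k}|=\sum_{v\in[n]\setminus\{1,2\}}\mathbb{P}\{v\in\mathcal{R}_1^{\le k},\,v\in\mathcal{R}_2^{\le k}\}$. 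Here I would invoke the one observation that recurs throughout: the score vectors $\mathcal{V}_1$ and $\mathcal{V}_2$ have exactly one edge in common, namely $(1,2)$, so conditioning on $V(1,2)$ renders $\{v\in\mathcal{R}_1^{\le k}\}$ and $\{v\in\mathcal{R}_2^{\le k}\}$ independent; moreover conditioning on a single score leaves $v$'s edge to compete against $n-2$ genuinely fresh i.i.d.\ scores whose relative rank is uniform, so each conditional probability is at most $k/(n-2)$. Hence every summand is at most $(k/(n-2))^2$ and $\mathbb{P}\{B_{1\cup 2}\}=O(k^2/n)$.

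The substance of the proof — and where I expect the only real difficulty — is bounding $\mathbb{P}\{B_{1\to 2}\}$; the argument for $B_{2\to 1}$ is symmetric. I would enlarge $B_{1\to 2}$ to the union over ordered pairs of vertices $(i,j)$ of the witness events $A_{i,j}:=\{i\in\mathcal{R}_1^{\le k}\}\cap\{j\in\mathcal{R}_i^{\le k}\}\cap\{j\in\mathcal{R}_2^{\le k}\cup\{2\}\}$. The point of subtracting $B_{1;2}\cup B_{2;1}\cup B_{1\cup 2}$ in the definition of $B_{1\to 2}$ is precisely that it deletes the degenerate witnesses: a witness with $i=2$ lies in $B_{1;2}$ and one with $j=1$ lies in $B_{2;1}$, so on $B_{1\to 2}$ every witness $(i,j)$ has $i\notin\{1,2\}$ and either $j=2$ (a two-hop witness: $i$ preferred by $1$, and $2$ preferred by $i$) or $j\notin\{1,2,i\}$ (a three-hop witness: $i$ preferred by $1$, $j$ preferred by $i$, and $j$ preferred by $2$). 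The obstacle is that $\mathcal{R}_1,\mathcal{R}_i,\mathcal{R}_2$ are mutually dependent, since $\mathcal{V}_1,\mathcal{V}_i,\mathcal{V}_2$ pairwise overlap in the edges $(1,i),(i,2),(1,2)$; the resolution is to peel off one factor at a time by conditioning, using that any two of these score vectors overlap in a single edge. For a three-hop witness I would write $\mathbb{P}\{A_{i,j}\}=\mathbb{E}\big[\mathbf{1}\{i\in\mathcal{R}_1^{\le k}\}\,\mathbf{1}\{j\in\mathcal{R}_2^{\le k}\}\,\mathbb{P}\{j\in\mathcal{R}_i^{\le k}\mid\mathcal{V}_1,\mathcal{V}_2\}\big]$, observe that conditioned on $\mathcal{V}_1\cup\mathcal{V}_2$ the scores $\{V(i,\ell):\ell\notin\{1,2\}\}$ are still fresh and i.i.d., so that $\mathbb{P}\{j\in\mathcal{R}_i^{\le k}\mid\mathcal{V}_1,\mathcal{V}_2\}\le k/(n-3)$ uniformly, and then bound $\mathbb{P}\{i\in\mathcal{R}_1^{\le k},\,j\in\mathcal{R}_2^{\le k}\}\le(k/(n-2))^2$ by conditioning on $V(1,2)$ exactly as above. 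This makes $\mathbb{P}\{A_{i,j}\}=O(k^3/n^3)$, and summing over the fewer than $n^2$ pairs gives $O(k^3/n)$. The two-hop terms are handled identically — condition on $\mathcal{V}_1$, so that $\mathbb{P}\{2\in\mathcal{R}_i^{\le k}\mid\mathcal{V}_1\}\le k/(n-2)$ and hence $\mathbb{P}\{A_{i,2}\}=O(k^2/n^2)$, giving $O(k^2/n)$ after summing over $i$. Adding the five contributions, $\mathbb{P}\{B\}=O(k/n)+O(k/n)+O(k^2/n)+O(k^3/n)+O(k^3/n)=O(k^3/n)$, as claimed.
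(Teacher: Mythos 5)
Your proof is correct, and it follows the same overall outline as the paper (split $B$ into its five constituent events, show the first three contribute $O(k/n)+O(k/n)+O(k^2/n)$ and that $B_{1\rightarrow2}$, $B_{2\rightarrow1}$ dominate at $O(k^3/n)$), but the execution of the two main estimates is genuinely different. For $B_{1\cup 2}$ the paper counts exactly the number of disjoint pairs of $k$-subsets and bounds $\mathbb{P}\{(B_{1\cup2}\setminus(B_{1;2}\cup B_{2;1}))^c\}\geq(1-\tfrac{2k+1}{n-1})^k$, whereas you use a first-moment bound on $\mathbb{E}\,|\mathcal{R}_1^{\le k}\cap\mathcal{R}_2^{\le k}|$ with conditional independence given the single shared score $V(1,2)$; both give $O(k^2/n)$. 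For $B_{1\rightarrow2}$ the paper conditions on $(B_{1;2}\cup B_{2;1}\cup B_{1\cup2})^c$, fixes $i\in\mathcal{R}_1^{\le k}$, lower-bounds the probability that the uniform $k$-subset $\mathcal{R}_i^{\le k}$ avoids the $(k+1)$-set $\{2\}\cup\mathcal{R}_2^{\le k}$ by $(1-\tfrac{k+1}{n-1})^k$, and takes a union bound over the $k$ candidates $i$, giving $k\bigl(1-(1-\tfrac{k+1}{n-1})^k\bigr)=O(k^3/n)$; you instead union over all $O(n^2)$ ordered witness pairs $(i,j)$ and factor each witness probability as $(k/(n-2))^2\cdot k/(n-3)$ (three-hop) or $\tfrac{k}{n-1}\cdot\tfrac{k}{n-2}$ (two-hop) by conditioning on the shared edge scores, using that distinct score sets $\mathcal{V}_1,\mathcal{V}_2,\mathcal{V}_i$ overlap pairwise in a single edge. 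What your route buys is a more explicit handling of the dependence structure — the paper's step of conditioning on the good event and asserting that "the probability estimate increases with this conditioning" is left somewhat informal, while your tower-property factorization with uniform conditional bounds $k/(n-2)$, $k/(n-3)$ is self-contained; what the paper's route buys is brevity, since it works directly with the uniform-random-$k$-subset picture and needs only $k$ (rather than $n^2$) terms in the union bound. Your accounting of which degenerate witnesses are removed by subtracting $B_{1;2}\cup B_{2;1}\cup B_{1\cup2}$ is also correct and makes the witness enumeration airtight.
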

\begin{proof}
    First, note that $\mathbb{P}(B_{1;2})=\mathbb{P}(B_{2;1})=\frac{k}{n-1}$. Consequently, $\mathbb{P}((B_{1;2} \cup B_{2;1})^c)=1-\mathbb{P}(B_{1;2} \cup B_{2;1})=1-O(k/n)$. Now, given the event $(B_{1;2} \cup B_{2;1})^c=B^c_{1;2}\cap B^c_{2;1}$, the sets  $\mathcal{R}_1^{\leq k} $ and  $\mathcal{R}_2^{\leq k}$ are independent and each is uniformly drawn from the collection of all subsets of size $k$ of $ \{3, \ldots, n\} $. Note that $B^c_{1\cup\,2}$ refers to the event where  $\mathcal{R}_1^{\leq k} $ and  $\mathcal{R}_2^{\leq k}$ are disjoint. Since choosing a pair of disjoint subsets of size $k$ from $\{3,\ldots,n\}$ can be done in
    \[\binom{n-2}{k}\binom{n-k-2}{k}\]
    ways, and  $\mathcal{R}_1^{\leq k} $ and  $\mathcal{R}_2^{\leq k}$ given $B^c_{1;2}\cap B^c_{2;1}$ are independent and uniform, we have:
    \begin{align}
        \mathbb{P}(B^c_{1\cup\,2}|B^c_{1;2}\cap B^c_{2;1})& =\frac{\binom{n-2}{k}\binom{n-k-2}{k}}{\binom{n-2}{k}^2}\notag\\
        & = \frac{\binom{n-k-2}{k}}{\binom{n-2}{k}}\notag\\
        & = \frac{(n-k-2)(n-k-3)\cdots(n-2k-1)}{(n-2)(n-3)\cdots(n-k-1)}\notag\\
        & \geq \left(1-\frac{2k-1}{n-2}\right)^k\label{counting}\\
        & = 1 - O(k^2/n) \notag
    \end{align}
    Therefore, 
    \begin{equation*}
        \mathbb{P}(B_{1\cup\,2}|B^c_{1;2}\cap B^c_{2;1})\leq O(k^2/n).
    \end{equation*}
    As a result,
    \begin{align*}
      \mathbb{P}(B_{1\cup\,2}\cap B^c_{1;2}\cap B^c_{2;1})& =\mathbb{P}(B_{1\cup\,2}|B^c_{1;2}\cap B^c_{2;1})\mathbb{P}(B^c_{1;2}\cap B^c_{2;1})  \\
      &\leq O(k^2/n)(1-O(k/n))\\
      &= O(k^2/n).
    \end{align*}
Hence, the following bound holds 
\begin{equation}\label{eq:bound-B12}
    \mathbb{P}(B_{1\cup\,2}\cup B_{1;2}\cup B_{2;1})=\mathbb{P}\big(B_{1\cup\,2}\cap B^c_{1;2}\cap B^c_{2;1}\big)+\mathbb{P}(B_{1;2}\cup B_{2;1})\leq O(k^2/n).
\end{equation}

To bound the probability of $B_{1\rightarrow2}$, we consider $B_{1\rightarrow2}=\big\{\exists i\in\mathcal{R}^{\leq k}_1 \textrm{ such that } (\mathcal{R}^{\leq k}_2 \cup \{2\}) \cap \mathcal{R}^{\leq k}_i \neq \phi\big\}$ given $B_{1\cup\,2}^c\cap B_{1;2}^c\cap B_{2;1}^c$ instead. Let $i\in\mathcal{R}_1^{\leq k}$ be a vertex. An argument similar to \eqref{counting} can be used to show that the probability of $(\mathcal{R}^{\leq k}_2 \cup \{2\}) \cap \mathcal{R}^{\leq k}_i=\phi$ is bounded below by $(1-\tfrac{k+1}{n-1})^k$. As a result, using the union bound, with probability at most $k\big(1-(1-\tfrac{k+1}{n-1})^k\big)$, there exists $i\in\mathcal{R}_1^{\leq k}$ without this property. Hence,
\begin{equation}
        \mathbb{P}(B_{1\rightarrow2}|B^c_{1\cap\,2}\cup B^c_{1;2}\cap B^c_{2;1}) \leq   k\left(1-\left(1-\frac{k+1}{n-1}\right)^k\right)  = O(k^3/n).
        \label{eq:bound-B3}
\end{equation}
Now, we bound $\mathbb{P}(B_{1\rightarrow2})$ using  \eqref{eq:bound-B12} and \eqref{eq:bound-B3}:
\begin{align*}
    \mathbb{P}(B_{1\rightarrow2})  = & \mathbb{P}(B_{1\rightarrow2}|(B_{1\cup\,2}\cup B_{1;2}\cup B_{2;1})^c)\mathbb{P}((B_{1\cup\,2}\cup B_{1;2}\cup B_{2;1})^c) + \\
    & \mathbb{P}(B_{1\rightarrow2}|B_{1\cup\,2}\cup B_{1;2}\cup B_{2;1})\mathbb{P}(B_{1\cup\,2}\cup B_{1;2}\cup B_{2;1}) \\
    \leq & O(k^3/n) + O(k^2/n)\\
    = & O(k^3/n)
\end{align*}
We established the desired bound for $B_{1\rightarrow2}$. The same is true for $B_{2\rightarrow1}$ in a similar way.
\end{proof}

\begin{lem}
 The following two probability bounds hold:
    \begin{align*}
\mathbb{P}(I_1 | B_{1;2}\cup B_{1\cup\,2}\cup B_{1\rightarrow2}) & \leq \exp\left( -k + \frac{1}{2}\sqrt{k}\log(k) + O(\sqrt{k})\right), \text{ and}\\
\mathbb{P}(I_2 | B_{2;1}\cup B_{1\cup\,2}\cup B_{2\rightarrow1}) & \leq \exp\left(-k + \frac{1}{2}\sqrt{k}\log(k)+ O(\sqrt{k})\right).
    \end{align*}
    \label{bound-conditional-isolation}
\end{lem}
\begin{proof}
    We will prove the first inequality, and then the second one will follow by symmetry.  Let $s=\lfloor \sqrt{k}\rfloor \leq k$. We begin by noting that any realization of $LK(n,k)$ conditional on $B_{1;2}\cup B_{1\cup\,2}\cup B_{1\rightarrow2}$ still induces a uniform distribution on the set of all permutations of $\mathcal{E}'_i=\mathcal{E}_i\setminus \{(i,j): j\in\{1,2\}\cup\mathcal{R}_1^s\}$ for $i\neq 2$, and in general on those of
    \[\mathcal{E}' = \bigcup_{i\in \mathcal{R}^{\leq s}_1\setminus\{2\}} \mathcal{E'}_i.\]
    This is because event $B_{1;2}$ does not affect the orders within edges in $\mathcal{E}'$ under the $i\neq 2$ assumption. Moreover, since $(i,2)\notin \mathcal{E}_i'$, $B_{1\cup\,2}$ and $B_{1\rightarrow2}$ only impose order restrictions on edges of form $\{(j,2): j\neq 2\} $, which are disjoint from $\mathcal{E}'$.  Hence, considering permutations of the union $\mathcal{E}_1\cup\mathcal{E}'$ given $B_{1;2}\cup B_{1\cup\,2}\cup B_{1\rightarrow2}$, there are only order restrictions on the elements of $\mathcal{E}_1$. Note that since $\mathcal{E}_1,\mathcal{E}'_{i_1},\mathcal{E}'_{i_2},...$ are disjoint for $i_j\in \mathcal{R}^{\leq s}_1\setminus\{2\}$, elements of $\mathcal{E}_1,\mathcal{E}'_{i_1},\mathcal{E}'_{i_2},...$ can be assigned types $0,1,2,...,s'$, respectively, where $s'=s-1\textrm{ or }s$ depending on whether $2\in\mathcal{R}_1^{\leq s}$ or not. \\
    We now discuss a necessary condition on these permutations for which vertex 1 is isolated. Suppose that vertex 1 is isolated conditional on $B_{1;2}\cup B_{1\cup\,2}\cup B_{1\rightarrow2}$. Therefore, for any $i\in\mathcal{R}_1^{\leq k}$ and specifically for any $i\in\mathcal{R}_1^{\leq s}\setminus\{2\}$, we have $(1,i)\notin \mathcal{R}_i^{\leq k}$. Equivalently, there exist $k$ elements of $\mathcal{E}_i\setminus\{(1,i)\}$ appearing before $(1,i)$ in the permutation. Thus, at least $k-s-2$ elements of $\mathcal{E}'_i$ appear earlier than $(1,i)$. This holds for all $i\in\mathcal{R}_1^{\leq s}\setminus\{2\}$, and since $\mathcal{E}^{\leq s}_1=\{(1,i): i\in\mathcal{R}_1^{\leq s}\}$ are the first $s$ elements of type 0 placed in this permutation, we conclude that there are at most $s$ elements of type 0 in the first $(s-1)(k-s-2)$ elements of this permutation. Lemma \ref{common-lem3} bounds the probability of the event above by 
    \[\mathbb{P}(I_1  | B_{1;2}\cup B_{1\cup\,2}\cup B_{1\rightarrow2}) \leq \exp\left(-k + \frac{1}{2}\sqrt{k}\log(k) + O(\sqrt{k})\right),\]
    which completes the proof.
\end{proof}

\begin{lem}
\label{bound-IB}
    The following holds
    \[\mathbb{P}(I_1I_2\cap B)\leq \exp\left(-\log(n) -k + \frac{1}{2}\sqrt{k}\log(k) + O(\sqrt{k})\right)\]
\end{lem}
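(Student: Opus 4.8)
The plan is to bound $\mathbb{P}\{I_1 I_2 \cap B\}$ by first recalling that $B = B_{1;2}\cup B_{2;1}\cup B_{1\cup 2}\cup B_{1\to 2}\cup B_{2\to 1}$, and then splitting according to whether $B_{1\rightsquigarrow 2}$ or $B_{2\rightsquigarrow 1}$ occurs. Since Lemma~\ref{bound-B4} gives $\mathbb{P}\{B_{1\rightsquigarrow 2}\cup B_{2\rightsquigarrow 1}\}\leq O(k^5/n^2)$, and under $B_{1\rightsquigarrow2}^c$ and $B_{2\rightsquigarrow1}^c$ the intersection $I_1 I_2 \cap B$ is contained in the union of the two events whose probabilities are controlled by Lemma~\ref{bound-IB123}, the proof should essentially be a union-bound combination:
\begin{align*}
\mathbb{P}\{I_1 I_2 \cap B\} \leq{}& \mathbb{P}\{B_{1\rightsquigarrow2}\cup B_{2\rightsquigarrow1}\} + \mathbb{P}\{I_1 I_2\cap B\cap (B_{1\rightsquigarrow2}^c\cap B_{2\rightsquigarrow1}^c)\}.
\end{align*}
The first term is $O(k^5/n^2)$, which (using $k=o(n^{1/8})$ or the assumed growth of $k$) is smaller than the claimed bound $\exp(-\log n - k + \tfrac12\sqrt{k}\log k + O(\sqrt{k}))$ once one checks the exponents; in fact $k^5/n^2 = \exp(-2\log n + 5\log k)$ which is comfortably dominated. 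So the only real work is the second term.

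For the second term, I would observe that on the event $B\cap B_{1\rightsquigarrow2}^c \cap B_{2\rightsquigarrow1}^c$ one of the following must hold: either $2\in\mathcal{R}_1^{\leq k}$ or $1\in\mathcal{R}_2^{\leq k}$ or $\mathcal{R}_1^{\leq k}\cap\mathcal{R}_2^{\leq k}\neq\emptyset$ or $B_{1\to 2}$ or $B_{2\to 1}$ holds; and in each case the relevant ``connecting structure'' forces, via $I_1$ (respectively $I_2$), that vertex $1$ (resp. vertex $2$) be isolated while that structure is present. More precisely, the event $I_1\cap(B_{1;2}\cup B_{1\cup 2}\cup B_{1\to 2})$ on $B_{1\rightsquigarrow2}^c$ is exactly what Lemma~\ref{bound-IB123} bounds, and symmetrically for $I_2\cap(B_{2;1}\cup B_{1\cup2}\cup B_{2\to1})$ on $B_{2\rightsquigarrow1}^c$. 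Since
\[
B\cap B_{1\rightsquigarrow2}^c\cap B_{2\rightsquigarrow1}^c \subseteq \big(B_{1;2}\cup B_{1\cup2}\cup B_{1\to2}\big)\cap B_{1\rightsquigarrow2}^c \;\cup\; \big(B_{2;1}\cup B_{1\cup2}\cup B_{2\to1}\big)\cap B_{2\rightsquigarrow1}^c,
\]
and $I_1 I_2 \leq I_1$ and $I_1 I_2\leq I_2$ pointwise, a union bound gives
\[
\mathbb{P}\{I_1 I_2\cap B\cap B_{1\rightsquigarrow2}^c\cap B_{2\rightsquigarrow1}^c\}\leq \mathbb{P}\{I_1\cap(B_{1;2}\cup B_{1\cup2}\cup B_{1\to2})\cap B_{1\rightsquigarrow2}^c\} + \mathbb{P}\{I_2\cap(B_{2;1}\cup B_{1\cup2}\cup B_{2\to1})\cap B_{2\rightsquigarrow1}^c\},
\]
and then I would pass from these unconditional probabilities to the conditional ones by noting $\mathbb{P}\{A\cap B_{1\rightsquigarrow2}^c\}\leq \mathbb{P}\{A\mid B_{1\rightsquigarrow2}^c\}$ (since $\mathbb{P}\{B_{1\rightsquigarrow2}^c\}\leq 1$), which is exactly the form delivered by Lemma~\ref{bound-IB123}. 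Each of the two terms is then $\leq \exp(-\log n - k + \tfrac12\sqrt k\log k + O(\sqrt k))$, and adding two such terms (and the negligible $O(k^5/n^2)$ tail) only changes the $O(\sqrt k)$ constant, yielding the claimed bound.

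The main obstacle I anticipate is the set-theoretic bookkeeping: making sure that the decomposition of $B\cap B_{1\rightsquigarrow2}^c\cap B_{2\rightsquigarrow1}^c$ into the two pieces above is actually valid given the way $B_{1\to2}$ and $B_{2\to1}$ were defined with the subtraction of $B_{1;2}\cup B_{2;1}\cup B_{1\cup2}$ — one has to check that every atom of $B$ lands in at least one of the two bracketed unions (this is immediate since $B_{1;2}, B_{1\cup2}$ appear on both sides and $B_{2;1}$ can be absorbed into the second union's $B_{2;1}$ term), and that the $I_1I_2\leq I_1$ bound loses nothing essential. A secondary, purely arithmetic point is verifying that $O(k^5/n^2)$ is dominated by $\exp(-\log n - k + \tfrac12\sqrt k\log k+O(\sqrt k))$ under the operative assumption on $k$ (e.g. $k\leq \log n$, for which $\exp(-\log n - k + \cdots)$ is of order $n^{-1}e^{-k}\cdot e^{O(\sqrt k\log k)}$ versus $n^{-2}k^5$; since $e^{-k+O(\sqrt k\log k)}\cdot n$ dominates $k^5/n$, the claim holds), so this term can simply be absorbed.
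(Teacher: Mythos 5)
Your proposal is correct and follows essentially the same route as the paper: decompose $B$ into the two one-sided unions $(B_{1;2}\cup B_{1\cup 2}\cup B_{1\rightarrow 2})$ and $(B_{2;1}\cup B_{1\cup 2}\cup B_{2\rightarrow 1})$, drop one of $I_1,I_2$ in each piece, handle $B_{1\rightsquigarrow 2}\cup B_{2\rightsquigarrow 1}$ via Lemma~\ref{bound-B4}, pass to conditional probabilities via $\mathbb{P}\{A\cap C^c\}\leq\mathbb{P}\{A\mid C^c\}$ so that Lemma~\ref{bound-IB123} applies, and absorb the $O(k^5/n^2)$ and the factor $2$ into the $O(\sqrt{k})$ term. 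The only difference is the order of the splitting (you condition on the $\rightsquigarrow$ events first, the paper decomposes $B$ first), which is immaterial.
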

\begin{proof} 
We establish the bound through Lemma \ref{bound-B} and \ref{bound-conditional-isolation}.
    \begin{align*}
         \mathbb{P}(I_1I_2\cap B)  & \leq  \mathbb{P}(I_1I_2 \cap (B_{1;2}\cup B_{1\cup\,2}\cup B_{1\rightarrow2}))+ \mathbb{P}(I_1I_2 \cap (B_{2;1}\cup B_{1\cup\,2}\cup B_{2\rightarrow1}))\\
        & \leq  \mathbb{P}(I_1 \cap (B_{1;2}\cup B_{1\cup\,2}\cup B_{1\rightarrow2}))+ \mathbb{P}(I_2 \cap (B_{2;1}\cup B_{1\cup\,2}\cup B_{2\rightarrow1}))\\
        & = \mathbb{P}(I_1 | B_{1;2}\cup B_{1\cup\,2}\cup B_{1\rightarrow2})\mathbb{P}(B_{1;2}\cup B_{1\cup\,2}\cup B_{1\rightarrow2})\\
        &\quad + \mathbb{P}(I_2 | B_{2;1}\cup B_{1\cup\,2}\cup B_{2\rightarrow1})\mathbb{P}(B_{2;1}\cup B_{1\cup\,2}\cup B_{2\rightarrow 1})\\
        & \leq O(k^3/n)\exp\left(-k + \frac{1}{2}\sqrt{k}\log(k) + O(\sqrt{k})\right)\quad (\text{Lemmas }\ref{bound-B}\text{ and }\ref{bound-conditional-isolation})\\
        & = \exp\left(-\log(n) -k + \frac{1}{2}\sqrt{k}\log (k) + O(\sqrt{k})\right).
    \end{align*}
Thus, the result holds.
\end{proof}

\begin{lem}
    We have the following conditional independence
 \[ 
 \mathbb{P}(I_1 I_2 | B^c )  = \mathbb{P}(I_1| B^c ) \mathbb{P}( I_2 | B^c ).
\]
  
    \label{conditional-independence}
\end{lem}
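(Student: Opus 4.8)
I would set up the proof by identifying, for each of $I_1,I_2,B$, the piece of randomness it actually depends on. Since $I_1=1$ exactly when $1\notin\mathcal{R}^{\leq k}_i$ for every $i\in\mathcal{R}^{\leq k}_1$, the indicator $I_1$ is a deterministic function of $\Xi_1:=\big(\mathcal{R}^{\leq k}_1,(\mathcal{R}^{\leq k}_i)_{i\in\mathcal{R}^{\leq k}_1}\big)$; symmetrically $I_2$ is a function of $\Xi_2:=\big(\mathcal{R}^{\leq k}_2,(\mathcal{R}^{\leq k}_i)_{i\in\mathcal{R}^{\leq k}_2}\big)$. Each constituent of $B$ is a function of $(\Xi_1,\Xi_2)$, and on $(B_{1;2}\cup B_{2;1}\cup B_{1\cup 2})^c$ one has $B=B_{1\to 2}\cup B_{2\to 1}$ with $B_{1\to 2}$ a function of $\Xi_1$ and the set $\mathcal{R}^{\leq k}_2$ only, and $B_{2\to 1}$ a function of $\Xi_2$ and $\mathcal{R}^{\leq k}_1$ only. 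Consequently, fixing a ``good'' pair $(\mathcal{R}^{\leq k}_1,\mathcal{R}^{\leq k}_2)=(A,B')$ (good meaning $A\cap B'=\emptyset$, $2\notin A$, $1\notin B'$, i.e. $(B_{1;2}\cup B_{2;1}\cup B_{1\cup 2})^c$), the event $B^c$ is exactly $H_1\cap H_2$ with $H_1:=\{\mathcal{R}^{\leq k}_i\cap(B'\cup\{2\})=\emptyset\ \forall i\in A\}$ (a function of $\Xi_1$) and $H_2:=\{\mathcal{R}^{\leq k}_{i'}\cap(A\cup\{1\})=\emptyset\ \forall i'\in B'\}$ (a function of $\Xi_2$).

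The crux is to show that, conditionally on $\{(\mathcal{R}^{\leq k}_1,\mathcal{R}^{\leq k}_2)=(A,B')\}$ with $(A,B')$ good, $(\Xi_1\mid H_1)$ is independent of $(\Xi_2\mid H_2)$. On $\{\mathcal{R}^{\leq k}_1=A\}$ the variable $\Xi_1$ is determined by the priority scores on $\mathcal{F}(A):=\mathcal{E}_1\cup\bigcup_{i\in A}\mathcal{E}_i$, and $\Xi_2$ by those on $\mathcal{F}'(B'):=\mathcal{E}_2\cup\bigcup_{i'\in B'}\mathcal{E}_{i'}$; the edges shared by these two sets form exactly the complete bipartite graph $\Gamma$ between $\{1\}\cup A$ and $\{2\}\cup B'$, while $\mathcal{F}(A)\setminus\Gamma$ and $\mathcal{F}'(B')\setminus\Gamma$ are disjoint and hence carry independent scores. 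The decisive observation is that on $\{(\mathcal{R}^{\leq k}_1,\mathcal{R}^{\leq k}_2)=(A,B')\}\cap H_1\cap H_2$ every edge of $\Gamma$ lies outside the top-$k$ list of \emph{both} its endpoints: $(i,i')$ with $i\in A, i'\in B'$ by $H_1$ at $i$ and $H_2$ at $i'$; $(1,i')$ by $\mathcal{R}^{\leq k}_1=A$ at $1$ and $H_2$ at $i'$; $(2,i)$ symmetrically; and $(1,2)$ by $2\notin A$ and $1\notin B'$. I would then invoke the elementary exchangeability fact that for a vertex $v$ and a fixed set $C\subseteq[n]\setminus\{v\}$ with $|C|\le n-1-k$, the random set $\mathcal{R}^{\leq k}_v$, conditioned on $\mathcal{R}^{\leq k}_v\cap C=\emptyset$, is uniform on the $k$-subsets of $[n]\setminus(\{v\}\cup C)$ and is independent of the scores $(V(v,c))_{c\in C}$. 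Applying it at the vertices of $\{1\}\cup A$ (with $C\supseteq$ their $\Gamma$-neighbours, i.e. $\{2\}\cup B'$) and at those of $\{2\}\cup B'$ shows that the conditional law of $\Xi_1$ under $\{\mathcal{R}^{\leq k}_1=A,\mathcal{R}^{\leq k}_2=B'\}\cap H_1$ makes no use of the $\Gamma$-scores, so $\Xi_1$ may be taken as a function of the scores on $\mathcal{F}(A)\setminus\Gamma$ alone, and likewise $\Xi_2$ of those on $\mathcal{F}'(B')\setminus\Gamma$; independence of these two disjoint score families gives the claim.

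The claim yields $(I_1\mid H_1)\perp(I_2\mid H_2)$ for each good $(A,B')$, hence $\mathbb{P}\{I_1I_2\mid (\mathcal{R}^{\leq k}_1,\mathcal{R}^{\leq k}_2)=(A,B'),B^c\}$ factors as $\mathbb{P}\{I_1\mid(\mathcal{R}^{\leq k}_1,\mathcal{R}^{\leq k}_2)=(A,B'),B^c\}\,\mathbb{P}\{I_2\mid(\mathcal{R}^{\leq k}_1,\mathcal{R}^{\leq k}_2)=(A,B'),B^c\}$. Since relabelling the vertices of $[n]\setminus\{1,2\}$ leaves the model's law invariant, acts transitively on good pairs of the given size, and fixes the laws of $I_1,I_2,B$, these two conditional probabilities equal constants $p_1,p_2$ independent of the particular good $(A,B')$. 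Averaging over $(A,B')$ under $\mathbb{P}\{\,\cdot\mid B^c\}$, and using $\mathbb{P}\{I_j\mid B^c\}=\sum_{(A,B')}p_j\,\mathbb{P}\{(\mathcal{R}^{\leq k}_1,\mathcal{R}^{\leq k}_2)=(A,B')\mid B^c\}=p_j$, gives $\mathbb{P}\{I_1I_2\mid B^c\}=p_1p_2=\mathbb{P}\{I_1\mid B^c\}\,\mathbb{P}\{I_2\mid B^c\}$. The main obstacle is the crux in the second paragraph: one must check that conditioning simultaneously on $\mathcal{R}^{\leq k}_1=A$, $\mathcal{R}^{\leq k}_2=B'$, $H_1$, $H_2$ — all constraints living partly on the overlap $\Gamma$ of $\mathcal{F}(A)$ and $\mathcal{F}'(B')$ — genuinely decouples $\Xi_1$ and $\Xi_2$, for which the ``doubly non-top-$k$'' property of the $\Gamma$-edges together with the exchangeability fact is exactly what is needed.
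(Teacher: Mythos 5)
Your setup mirrors the paper's own strategy: once $B^c$ is in force (for a fixed ``good'' pair $(A,B')$ this is exactly your $H_1\cap H_2$), the isolation of vertex $1$ is decided by scores attached to $\{1\}\cup A$ and that of vertex $2$ by scores attached to $\{2\}\cup B'$, and one wants to conclude independence from the independence of the scores. Your reduction to a fixed good pair, the identity $B^c=H_1\cap H_2$, the observation that every $\Gamma$-edge lies outside the top-$k$ list of both its endpoints, and the final symmetry/averaging step are all fine, and you correctly flag the decoupling of $\Xi_1$ and $\Xi_2$ under the joint conditioning as the crux. The gap is that the argument you give for that crux does not work. Your exchangeability fact concerns a single vertex $v$ conditioned only on $\{\mathcal{R}^{\leq k}_v\cap C=\emptyset\}$; in the application the conditioning also pins $\mathcal{R}^{\leq k}_1=A$ and imposes $H_2$, and each $\Gamma$-score is constrained simultaneously at both of its endpoints (e.g.\ $V(i,i')$ must lose at $i$ by $H_1$ and at $i'$ by $H_2$). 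What you actually establish is only that \emph{on the event} each $\mathcal{R}^{\leq k}_i$, $i\in A$, equals the top-$k$ of $i$'s non-$\Gamma$ scores, i.e.\ a pointwise representation $I_1=f_1(X)$, $I_2=f_2(Y)$ with $X,Y$ the two private score families. That does not imply conditional independence: when the conditioning event $E$ depends jointly on $(X,Z)$ and on $(Y,Z)$ through the shared $\Gamma$-scores $Z$, a pointwise factorization on $E$ does not yield $\mathbb{P}\{I_1I_2\mid E\}=\mathbb{P}\{I_1\mid E\}\mathbb{P}\{I_2\mid E\}$, and your final sentence silently uses the stronger statement that, given $E$, the law of $\Xi_1$ is independent of $Z$.

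That stronger statement is in fact false. The mechanism: $\mathcal{R}^{\leq k}_1=A$ forces every $V(1,a)$, $a\in A$, to exceed the $\Gamma$-scores $V(1,i')$, $i'\in B'\cup\{2\}$; since $V(1,a)=V(a,1)$ also competes for a top-$k$ slot at $a$, large $\Gamma$-scores at vertex $1$ push $V(1,a)$ upward and make it more likely that $a$ reciprocates, lowering the conditional probability of $I_1$. Already for $k=1$, $A=\{a\}$, $B'=\{b\}$ and one further vertex $w$, a short computation with uniform scores gives that the regular conditional probability $\mathbb{P}\{I_1\mid E,\,Z\}$ equals $1/3$ when the $\Gamma$-scores are near $0$, but $2/9$ when $\max\{V(1,2),V(1,b)\}=1/2$ and the $\Gamma$-scores at $a$ are near $0$; so $I_1$ is not conditionally independent of $Z$ given $E$, and the route ``$\Xi_1$ may be taken as a function of the non-$\Gamma$ scores alone, hence independent of $\Xi_2$'' collapses. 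To rescue the factorization one would have to show that the covariance of $\mathbb{P}\{I_1\mid E,Z\}$ and $\mathbb{P}\{I_2\mid E,Z\}$ vanishes (or is asymptotically negligible — for the use in Lemma \ref{lem-second-method-2} an upper bound with a $(1+o(1))$ factor would suffice) under the conditional law of $Z$; neither your argument nor the paper's brief ``disjoint edges, independent scores'' justification addresses this coupling, so the crux you identified is precisely where additional work is needed.
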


\begin{proof}
Note that $I_1$ holds whenever for each $1\leq i\leq k$ there are at least $k$ elements $j\in[n]\setminus\{1, R_1^i\}$ such that $V(R_1^i, j) > V(R_1^i, 1)$. Assuming $B^c$, this is equivalent to there being at least $k$ elements $j_1\in [n] \setminus \{1,2\} \setminus \{R_2^1,\cdots,R_2^k\}$
with this property. Repeating this argument for $I_2$, for each $1\leq i\leq k$, there must be at least $k$ elements $j_2\in [n] \setminus \{1,2\} \setminus \{R_1^1, R_1^2,\cdots, R_1^k\} $ with $V(R_2^i, j_2) > V(R_2^i, 2)$. Ruling out the possibility of choosing $j_2,j_1$ from $ \{1,2,R_1^1, R_1^2,\cdots, R_1^k\}$, and $\{1,2,R_2^1,\cdots,R_2^k\}$, respectively, implies that there is no common edge among $(R_1^i,1),(R_1^i,j_1),(R_2^i,,2),(R_2^i, j_2)$. Since the scores are chosen independently, and the inequality conditions for the isolation of vertex 1 and that of vertex 2 are disjoint, we obtain conditional independence.
\end{proof}

\begin{lem}\label{lem-second-method-2}
Let $t'<-0.5$ be a real number. Assume $k$ grows to infinity as $n\rightarrow\infty$ with  $k\leq \log(n) + t' \log\log(n)\sqrt{\log(n)}$. Then we have
\[ \limsup_{n\rightarrow\infty} \frac{\mathbb{E}[I_1 I_2]}{\mathbb{E}[I_1]^2} \leq  1.\]
\end{lem}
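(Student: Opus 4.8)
The plan is to split $\mathbb{E}\{I_1 I_2\}$ according to whether the event $B$ occurs and bound the two pieces with the structural lemmas already established in this section. Write
\[\mathbb{E}\{I_1 I_2\} = \mathbb{P}\{I_1 I_2 \cap B\} + \mathbb{P}\{I_1 I_2 \cap B^c\}.\]
Under the hypothesis $k \leq \log(n) + t'\log\log(n)\sqrt{\log(n)}$ with $t'<-1/2$ we have $k\to\infty$ and $k = O(\log n) = o(n^{1/8})$, so every lemma of this section applies; in particular $\mathbb{P}\{B\} \leq O(k^3/n) \to 0$ by Lemma~\ref{bound-B}, hence $\mathbb{P}\{B^c\} = 1 - o(1)$, and eventually $k < \log n$.

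For the term on $B^c$ I would invoke the conditional independence of Lemma~\ref{conditional-independence} together with the symmetry of the model (and of the event $B$) under transposing vertices $1$ and $2$, which gives $\mathbb{P}\{I_1\mid B^c\} = \mathbb{P}\{I_2\mid B^c\}$ and hence
\[\mathbb{P}\{I_1 I_2 \cap B^c\} = \mathbb{P}\{B^c\}\,\mathbb{P}\{I_1 \mid B^c\}\,\mathbb{P}\{I_2 \mid B^c\} = \mathbb{P}\{B^c\}\,\mathbb{P}\{I_1 \mid B^c\}^2.\]
Since $\mathbb{P}\{I_1 \mid B^c\} = \mathbb{P}\{I_1 \cap B^c\}/\mathbb{P}\{B^c\} \leq \mathbb{P}\{I_1\}/\mathbb{P}\{B^c\}$, this is at most $\mathbb{P}\{I_1\}^2/\mathbb{P}\{B^c\} = (1+o(1))\,\mathbb{E}\{I_1\}^2$.

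For the term on $B$ I would feed the bound of Lemma~\ref{bound-IB},
\[\mathbb{P}\{I_1 I_2 \cap B\} \leq \exp\!\left(-\log(n) - k + \tfrac12\sqrt{k}\log(k) + O(\sqrt{k})\right),\]
against the lower bound of Lemma~\ref{prob-iso-lower-bound} taken with $\delta = \sqrt{6\log(k)/k}$, namely $\mathbb{E}\{I_1\}^2 \geq \exp\!\left(-2k - 2\sqrt{6k\log(k)} + O(1)\right)$. Dividing, it suffices to check that
\[-\log(n) + k + \tfrac12\sqrt{k}\log(k) + 2\sqrt{6k\log(k)} + O(\sqrt{k}) \longrightarrow -\infty.\]
Eventually $k \leq \log(n)$, so $\tfrac12\sqrt{k}\log(k) \leq \tfrac12\sqrt{\log(n)}\log\log(n)$, while $\sqrt{k\log(k)} = o(\sqrt{\log(n)}\log\log(n))$ and $\sqrt{k} = o(\sqrt{\log(n)}\log\log(n))$; combined with $k - \log(n) \leq t'\log\log(n)\sqrt{\log(n)}$, the displayed quantity is at most $\left(t' + \tfrac12 + o(1)\right)\sqrt{\log(n)}\log\log(n)$, which tends to $-\infty$ precisely because $t' < -1/2$. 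Hence $\mathbb{P}\{I_1 I_2 \cap B\} = o\!\left(\mathbb{E}\{I_1\}^2\right)$, and adding the two estimates yields $\mathbb{E}\{I_1 I_2\} \leq (1+o(1))\mathbb{E}\{I_1\}^2$, i.e. $\limsup_n \mathbb{E}\{I_1 I_2\}/\mathbb{E}\{I_1\}^2 \leq 1$.

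The main obstacle is this last asymptotic comparison: one must make sure the extra $\tfrac12\sqrt{k}\log(k)$ in the exponent of the $B$-contribution — and the $2\sqrt{6k\log(k)}$ produced by the crude choice $\delta = \sqrt{6\log(k)/k}$ in the lower bound for $\mathbb{E}\{I_1\}$ — are both absorbed by the slack $t' < -1/2$ in the hypothesis, and that every remaining error term is of strictly smaller order than $\sqrt{\log(n)}\,\log\log(n)$. This is exactly where the threshold constant $-1/2$ in the statement (and in Theorem~\ref{thm:disconnected}) comes from. Everything else is routine bookkeeping with the lemmas already proved, after which the standard second moment method applied to $\sum_i I_i$ (the next step) converts $\mathbb{E}\{\text{number of isolated vertices}\}\to\infty$ into the conclusion that with high probability some vertex is isolated.
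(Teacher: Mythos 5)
Your proof is correct and follows essentially the same route as the paper: split over $B$ versus $B^c$, use the conditional independence of Lemma~\ref{conditional-independence} on $B^c$, bound the $B$-piece by Lemma~\ref{bound-IB}, and compare it against the lower bound of Lemma~\ref{prob-iso-lower-bound} with $\delta=\sqrt{6\log(k)/k}$, where $t'<-1/2$ absorbs the $\tfrac12\sqrt{k}\log(k)+O(\sqrt{k\log k})$ exponents. The only (harmless) difference is that for the $B^c$-term you use the one-sided bound $\mathbb{P}\{I_1\mid B^c\}\leq \mathbb{P}\{I_1\}/\mathbb{P}\{B^c\}$ together with $\mathbb{P}\{B^c\}\to 1$, whereas the paper establishes the two-sided asymptotic $\mathbb{P}\{I_1\mid B^c\}/\mathbb{P}\{I_1\}\to 1$; since only a $\limsup$ upper bound is claimed, your shortcut suffices.
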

\begin{proof} 

We begin with the following equality:
\begin{align}
        \frac{\mathbb{P}(I_1)}{\mathbb{P}(B^c)} & = \frac{\mathbb{P}(I_1\cap B^c)}{\mathbb{P}(B^c)} + \frac{\mathbb{P}(I_1 \cap B)}{\mathbb{P}(B^c)}. \label{eq:I1Bcbound}
\end{align}
Hence, it follows that
\begin{align}
        \frac{1}{\mathbb{P}(B^c)} & = \frac{\mathbb{P}(I_1\cap B^c)}{\mathbb{P}(I_1)\mathbb{P}(B^c)} + \frac{\mathbb{P}(I_1 \cap B)}{\mathbb{P}(I_1)\mathbb{P}(B^c)}\notag \\
        & =  \frac{\mathbb{P}(I_1|B^c)}{\mathbb{P}(I_1)} + \frac{\mathbb{P}(I_1 \cap B)}{\mathbb{P}(I_1)\mathbb{P}(B^c)}.
        \label{bbb2}
\end{align}
Lemmas \ref{prob-iso-lower-bound} and \ref{bound-B} imply
\begin{align}
         \frac{\mathbb{P}(I_1 \cap B)}{\mathbb{P}(I_1)\mathbb{P}(B^c)} & \leq O\left(\frac{k^3}{n}\right)e^{k\left(1+\sqrt{\frac{6\log(k)}{k}}\right)+o(1)}\notag\\
         & =\exp\left(k+\sqrt{6k\log(k)} + 3\log(k) - \log(n)+O(1)\right)\notag\\
         & \leq \exp\left( t' \log\log(n)\sqrt{\log(n)} + \sqrt{6k\log(k)} + 3\log(k))) + O(1)\right) \notag \\
         &  \leq \exp\left( t' \log(k)\sqrt{k} + \sqrt{6k\log(k)} + 3\log(k)+O(1)\right) \\
          & \rightarrow_{k\rightarrow\infty} 0, \notag
\end{align}
where we use the fact that $k \leq \log(n)$. Considering the fact $\tfrac{1}{\mathbb{P}(B^c)}\rightarrow 1$, equation \eqref{bbb2} implies that
\begin{equation}
    \frac{\mathbb{P}(I_1|B^c)}{\mathbb{P}(I_1)}\rightarrow 1.
    \label{single-asym}
\end{equation}
It follows from Lemmas \ref{bound-IB}, \ref{conditional-independence} that
\begin{align}
    \mathbb{P}(I_1 I_2)  = & \mathbb{P}(I_1 I_2 \cap B^c ) + \mathbb{P}(I_1 I_2 \cap B )\notag\\
    \leq & \mathbb{P}(I_1 I_2 \cap B^c ) + \exp\left(-\log(n) -k + \frac{1}{2}\sqrt{k}\log(k) + O(\sqrt{k})\right)\notag \\
    \leq & \mathbb{P}(I_1 I_2 | B^c ) + \exp\left(-\log(n) -k + \frac{1}{2}\sqrt{k}\log(k) + O(\sqrt{k})\right)\notag \\
     = & \mathbb{P}(I_1| B^c ) \mathbb{P}( I_2 | B^c ) + \exp\left(-\log(n) -k + \frac{1}{2}\sqrt{k}\log(k) + O(\sqrt{k})\right).
    \label{bbb3}
\end{align}
Dividing both sides of (\ref{bbb3}) by $\mathbb{P}(I_1)\mathbb{P}(I_2)=\mathbb{P}(I_1)^2$ and using Lemma \ref{prob-iso-lower-bound}, we have
\begin{align}
    & \frac{\mathbb{P}(I_1 I_2)}{\mathbb{P}(I_1)\mathbb{P}(I_2)} \notag \\& \leq \frac{\mathbb{P}(I_1| B^c ) \mathbb{P}( I_2 | B^c )}{\mathbb{P}(I_1)\mathbb{P}(I_2)} + \frac{\exp\left(-\log(n) -k + \frac{1}{2}\sqrt{k}\log(k) + O(\sqrt{k})\right)}{{\mathbb{P}(I_1)\mathbb{P}(I_2)}}\notag \\
    & \leq \frac{\mathbb{P}(I_1| B^c ) \mathbb{P}(I_2 | B^c )}{\mathbb{P}(I_1)\mathbb{P}(I_2)} + \frac{\exp\left(-\log(n) -k +\frac{1}{2}\sqrt{k}\log(k) + O(\sqrt{k})\right)}{\exp\left( -2k\left(1+\sqrt{\frac{6\log(k)}{k}}\right)+o(1)\right)}\notag \\
    & \leq \frac{\mathbb{P}(I_1| B^c ) \mathbb{P}( I_2 | B^c )}{\mathbb{P}(I_1)\mathbb{P}(I_2)} + \exp\left(-\log(n) + k +\frac{1}{2}\sqrt{k}\log(k)+\sqrt{24k\log(k)} + O(\sqrt{k}) \right) \notag \\
    & \leq \frac{\mathbb{P}(I_1| B^c ) \mathbb{P}( I_2 | B^c )}{\mathbb{P}(I_1)\mathbb{P}(I_2)} \notag \\ 
    & \quad + \exp\left(t'\log\log(n)\sqrt{\log(n)}+\frac{1}{2}\sqrt{k}\log(k) +\sqrt{24k\log(k)}+ O(\sqrt{k}) \right) \notag \\
    & \leq \frac{\mathbb{P}(I_1| B^c ) \mathbb{P}( I_2 | B^c )}{\mathbb{P}(I_1)\mathbb{P}(I_2)} +  \exp\left((0.5+t')\log(k)\sqrt{k} +\sqrt{24k\log(k)}+ O(\sqrt{k}) \right).
    \label{final-step}
\end{align}
Equation \eqref{single-asym} guarantees that the first term in right hand side converges to 1. The second term vanishes as $k\rightarrow\infty$ as a result of $t'<-0.5$. Since $\mathbb{E}[I_1]=\mathbb{P}(I_1)$ and $\mathbb{E}[I_1I_2]=\mathbb{P}(I_1I_2)$, this completes the proof.
\end{proof}

\begin{thm}\label{thm:disconnected}
The random graph model $\mathit{LK}(n,k)$ with $k \leq \log (n) + t'\log \log (n)\sqrt{\log (n)}$ for $t'<-\tfrac{1}{2}$ is not connected with high probability. In particular, if $k=\lfloor t\log (n) \rfloor$ for $0<t<1$, the graph is disconnected with high probability.
\end{thm}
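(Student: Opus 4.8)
The plan is to apply the second moment method to the number of isolated vertices $Z:=\sum_{i=1}^{n}I_i$. Since any graph on $n\geq 2$ vertices with an isolated vertex is disconnected, it suffices to show $\mathbb{P}\{Z=0\}\to 0$ as $n\to\infty$. The two moment estimates are already available: Lemma~\ref{lem-second-method-1} (together with Lemma~\ref{prob-iso-lower-bound}) controls the first moment, and Lemma~\ref{lem-second-method-2} controls the ratio of the mixed second moment to the square of the first.

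First I would check compatibility of the hypotheses. For any fixed $t'<0$, since $\sqrt{\log(n)}$ grows faster than $3\sqrt{\log\log(n)}$, the bound $k\leq \log(n)+t'\log\log(n)\sqrt{\log(n)}$ implies $k\leq \log(n)-3\sqrt{\log(n)\log\log(n)}$ for all sufficiently large $n$; hence Lemma~\ref{lem-second-method-1} applies and $\mathbb{E}\{Z\}=n\,\mathbb{P}\{I_1\}\to\infty$ (equivalently, this follows directly from $\mathbb{P}\{I_1\}\geq O(e^{-k(1+\delta)})$ with $\delta=\sqrt{6\log(k)/k}$ in Lemma~\ref{prob-iso-lower-bound}, using $k\leq\log(n)$). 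Also $k\to\infty$, so Lemma~\ref{lem-second-method-2} is in force. Then, by exchangeability of the $I_i$ and $I_i^2=I_i$,
\[
\mathrm{Var}(Z)=n\,\mathbb{E}\{I_1\}+n(n-1)\,\mathbb{E}\{I_1I_2\}-n^2\,\mathbb{E}\{I_1\}^2\leq n\,\mathbb{E}\{I_1\}+n^2\big(\mathbb{E}\{I_1I_2\}-\mathbb{E}\{I_1\}^2\big),
\]
and Chebyshev's inequality gives
\[
\mathbb{P}\{Z=0\}\leq \frac{\mathrm{Var}(Z)}{\mathbb{E}\{Z\}^2}\leq \frac{1}{\mathbb{E}\{Z\}}+\left(\frac{\mathbb{E}\{I_1I_2\}}{\mathbb{E}\{I_1\}^2}-1\right).
\]
Taking $\limsup_{n\to\infty}$: the first term vanishes since $\mathbb{E}\{Z\}\to\infty$, and the second has nonpositive limsup by Lemma~\ref{lem-second-method-2}. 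Hence $\mathbb{P}\{Z=0\}\to 0$, so $\mathbb{G}(n,k)$ has an isolated vertex, and is therefore disconnected, with high probability. For the special case $k=\lfloor t\log(n)\rfloor$ with $0<t<1$: because $(1-t)\log(n)$ grows faster than $\log\log(n)\sqrt{\log(n)}$, we have $\lfloor t\log(n)\rfloor\leq \log(n)+t'\log\log(n)\sqrt{\log(n)}$ for every fixed $t'<-\tfrac12$ once $n$ is large, so it reduces to the general statement.

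Beyond invoking Lemmas~\ref{lem-second-method-1} and~\ref{lem-second-method-2}, this is routine second-moment bookkeeping, and I do not anticipate a genuine obstacle. The only points needing a little care are the growth-rate comparison that lets both lemmas apply in the stated $k$-window (it hinges on $\log\log(n)\sqrt{\log(n)}$ lying between $\sqrt{\log(n)\log\log(n)}$ and $\log(n)$) and the elementary remark that a single isolated vertex already certifies disconnectedness. All the substantive difficulty is front-loaded into the correlation estimate of Lemma~\ref{lem-second-method-2} and the bounds on the events $B$ it relies on.
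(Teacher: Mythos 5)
Your proposal is correct and follows essentially the same route as the paper: the paper's proof of Theorem~\ref{thm:disconnected} likewise applies the second moment method to the count of isolated vertices, combining Lemma~\ref{lem-second-method-1} (first moment diverges) with Lemma~\ref{lem-second-method-2} (ratio $\mathbb{E}\{I_1I_2\}/\mathbb{E}\{I_1\}^2$ has limsup at most $1$) via the same Chebyshev-type bound $\mathbb{P}\{Z=0\}\leq \tfrac{1}{n\mathbb{E}\{I_1\}}+\tfrac{(n-1)\mathbb{E}\{I_1I_2\}}{n\mathbb{E}\{I_1\}^2}-1$. Your added check that the stated $k$-window implies the hypothesis of Lemma~\ref{lem-second-method-1} is a small but welcome piece of bookkeeping that the paper leaves implicit.
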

\begin{proof}
Having both Lemmas \ref{lem-second-method-1} and \ref{lem-second-method-2} one can apply the second moment method. We begin with
\begin{align}
          \mathbb{E}\left[\left(\sum_{i=1}^n I_i \right)^2\right] - \mathbb{E}\left[\sum_{i=1}^n I_i \right]^2 
         &  = \mathrm{Var}\left[\sum_{i=1}^n I_i \right] \notag\\ 
         & \geq \left(0 - \mathbb{E}\left[\sum_{i=1}^n I_i \right] \right)^2 \mathbb{P}\left(\sum_{i=1}^n I_i = 0 \right). 
\end{align}
Therefore,
\begin{align}
    \mathbb{P}\left(\sum_{i=1}^n I_i = 0\right) & \leq \frac{\mathbb{E}[(\sum_{i=1}^n I_i )^2] - \mathbb{E}[\sum_{i=1}^n I_i ]^2}{\mathbb{E}[\sum_{i=1}^n I_i ] ^2 }  \notag\\
    & \leq \frac{\mathbb{E}[(\sum_{i=1}^n I_i )^2]}{\mathbb{E}[\sum_{i=1}^n I_i ] ^2 } - 1 \notag\\
     & = \frac{\sum_{i=1}^n \mathbb{E}[I_i^2]+\sum_{i,j,i\neq j}\mathbb{E}[I_i I_j]}{n^2 \mathbb{E}[I_1]^2} - 1 \notag\\
     & = \frac{n \mathbb{E}[I_1] + n(n-1) \mathbb{E}[I_1 I_2]}{n^2 \mathbb{E}[I_1]^2} - 1 \notag\\
     & = \frac{1}{n \mathbb{E}[I_1]}+\frac{(n-1) \mathbb{E}[I_1 I_2]}{n \mathbb{E}[I_1]^2} - 1.
    \label{ineq1}
\end{align}
Now it follows from Lemma \ref{lem-second-method-1} that the first fraction vanishes as $n\rightarrow\infty$. That is, for any $\epsilon>0$, there exists $N_1>0$ such that $ \frac{1}{n \mathbb{E}[I_1]}\leq\frac{\epsilon}{2}$ for all $n>N_1$. Moreover, Lemma \ref{lem-second-method-2} implies that the second fraction will be sufficiently close to 1. Indeed, there exists $N_2>0$, such that for all $n>N_2$, we have $\frac{ \mathbb{E}[I_1 I_2]}{ \mathbb{E}[I_1]^2}\leq 1 + \frac{\epsilon}{2}$. Hence, for any given $\epsilon>0$, we find $N=\max(N_1,N_2)$ such that for any $n>N$:
\begin{align*}
    0& \leq\mathbb{P}\left(\sum_{i=1}^n I_i = 0\right)\\
    & \leq \frac{1}{n \mathbb{E}[I_1]}+\frac{(n-1) \mathbb{E}[I_1 I_2]}{n \mathbb{E}[I_1]^2} - 1\\
    & \leq \frac{\epsilon}{2}+ 1 + \frac{(n-1)\epsilon}{2n} - 1\\
    & \leq \epsilon
\end{align*}
This shows $\mathbb{P}\left(\sum_{i=1}^n I_i = 0\right)\rightarrow0$ as $n\rightarrow\infty$. Therefore, the probability of having no isolated vertex converges to $0$, ensuring the existence of an isolated vertex with high probability as $n$ grows to infinity. 
\end{proof}

\section{Connectivity of $\mathit{LK}(n,k)$ Graphs for $t>1$}\label{sec:connectivity}

A common method to prove connectivity of random graphs is by locating (with high-probability) a Erd\H{o}s-R\'enyi sub-graph consisting of all the vertices. Then, the celebrated result of Erd\H{o}s and R\'enyi \cite{erdHos1960evolution,erdHos1959randomgraph,renyi1959connected} guarantees connectivity if the probability of an edge being present is $p = \tfrac{t\log(n)}{n}$ where $t>1$. This method used in \cite{la2015new} proves the connectivity of $LK(n,k)$ for $k=t\log(n)$ where $t>C=2.4625$. The link between Erd\H{o}s-R\'enyi and $LK(n,k)$ is made through a concentration property of order statistics. The authors of \cite{la2015new} assume that the edges are independently scored by $\mathrm{Exp}(1)$ random variables which results in the presence of edges whose scores are greater than or equal $\log((n-1)/(t\log(n))) + \sqrt{2/t}$ with high probability. Therefore, if $L_{i,j}$ is a random variable distributed as $\mathrm{Exp}(1)$, then
\[\mathbb{P}\left(L_{i,j}>\log\left(\frac{n-1}{t\log(n)}\right) + \sqrt{\frac{2}{t}}\right)= \frac{t\log(n)}{n-1} e^{-\sqrt{\frac{2}{t}}}.\]
It can be easily shown that $te^{-\sqrt{\tfrac{2}{t}}}>1$ for $t>2.4625$. This provides an independent possibility of connection for all edges with the probability of $t'\log (n)/ (n-1)$ for a $t'=te^{-\sqrt{\frac{2}{t}}}>1$. Thereafter, the connectivity result of Erd\H{o}s-R\'enyi random graphs~\cite{erdHos1959randomgraph,erdHos1960evolution,renyi1959connected} implies the connectivity of $\mathit{LK}(n,k)$.

We prove the connectivity of $\mathit{LK}(n,k)$ for all $t>1$ in two steps. First, we rule out the possibility of having components of size $O(1)$. Second, we apply the idea from \cite{la2015new} described above to find an Erd\H{o}s-R\'enyi graph with all edges contained in $\mathit{LK}(n,k)$. As opposed to \cite{la2015new}, we do not restrict $t$ to find a $t'>1$.  Hence, the resulting Erd\H{o}s-R\'enyi graph is not necessarily connected. However, by modifying the original proof of the connectivity of Erd\H{o}s-R\'enyi graphs for $t'>1$ from \cite{renyi1959connected}, we can prove a weaker result for an arbitrary $t'>1$ which then helps us deduce the connectivity result. 

\begin{thm}
Let $\kappa\geq 0$ be a non-negative integer. Consider random graphs of class $\mathit{LK}(n,k)$ where $k \geq \log(n) +  t'\log\log(n)\sqrt{\log(n)} $ for a real number $t'>\tfrac{1}{2}$ . Then
\[\mathbb{P}\big(\exists \textrm{ a vertex of degree less than or equal } \kappa\big)\rightarrow 0\]
as $n\rightarrow\infty$.
\label{exclude-o(1)}
\end{thm}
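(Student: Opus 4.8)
The plan is to bound the expected number of vertices of degree $\le \kappa$ and show it tends to $0$; then a union bound (equivalently, Markov's inequality on a non-negative integer-valued random variable) gives the result. By symmetry it suffices to show $n\,\mathbb{P}\{\deg(1)\le\kappa\}\to 0$, i.e., $\mathbb{P}\{\deg(1)\le\kappa\} = o(1/n)$. As in the proof of Lemma~\ref{prob-iso-lower-bound}, I would condition on the order of the scores incident to vertex $1$, so that $V(1,2)>V(1,3)>\cdots>V(1,n)$ and vertex $1$ proposes precisely to $\{2,3,\dots,k+1\}$. For vertex $1$ to have degree $\le\kappa$, at most $\kappa$ of the vertices in $\{2,\dots,k+1\}$ may reciprocate; hence at least $k-\kappa$ of them must \emph{not} propose edge $(1,j)$, which (by the argument in Lemma~\ref{prob-iso-lower-bound}) is implied by the event that at least $k$ edges of $\mathcal{E}_j$ appear before the first element of $\mathcal{E}_1$ in the induced permutation.

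The key reduction is then to the infinite-urn / negative-multinomial framework of Section~\ref{sec:asympdist}. As in Lemma~\ref{bound-IB123}, set $s=\lfloor\sqrt{k}\rfloor$, restrict attention to the first $s$ coordinates $\mathcal{R}_1^{\leq s}$, make the edge sets pairwise disjoint by deleting the $O(k)$ edges to already-named vertices (giving type-$i$ classes of size $n-O(k)$ and a type-$0$ class $\mathcal{E}_1$ of size $n-1$), and invoke Lemma~\ref{common-lem3}. Concretely, for all but at most $\kappa$ of the $i\in\mathcal{R}_1^{\leq s}$ we need at least $k-s-O(1)$ edges of $\mathcal{E}'_i$ before the $s^{\text{th}}$ element of $\mathcal{E}_1$; equivalently, in the first roughly $(s-\kappa-b)(k-ls-1)$ positions of the permutation there are at most $s-1$ type-$0$ elements. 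This is exactly the regime $s,t\asymp\sqrt{k}$, $m=(s-b)(k-ls-1)$, $b,l=O(1)$ of Lemma~\ref{common-lem3} (the constant $\kappa$ is absorbed into the $O(1)$ terms $b,l$), yielding
\[
\mathbb{P}\{\deg(1)\le\kappa\}\;\le\;\exp\!\left(-k+\tfrac12\sqrt{k}\log k + O(\sqrt{k})\right).
\]

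Finally I would multiply by $n$ and take logarithms: $\log n - k + \tfrac12\sqrt{k}\log k + O(\sqrt{k})$. Under the hypothesis $k\ge \log n + t'\log\log(n)\sqrt{\log n}$ with $t'>\tfrac12$, and using $k=O(\log n)$ (so $\sqrt{k}\log k = O(\sqrt{\log n}\,\log\log n)$ and the $O(\sqrt{k})$ term is lower order), the exponent is at most $-t'\log\log(n)\sqrt{\log n} + \tfrac12\log\log(n)\sqrt{\log n} + o(\log\log(n)\sqrt{\log n}) = (\tfrac12 - t')\log\log(n)\sqrt{\log n}(1+o(1)) \to -\infty$, so $n\,\mathbb{P}\{\deg(1)\le\kappa\}\to 0$. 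I expect the main obstacle to be bookkeeping rather than a genuine difficulty: one must check carefully that dropping at most $\kappa$ of the type-$i$ constraints, plus deleting $O(k)$ edges per class to enforce disjointness, only perturbs the parameters $m$, $s$, $t$ of Lemma~\ref{common-lem3} by $O(1)$ (in $b,l$) and $O(n^{1/4})$ (in the class sizes), so that the hypotheses $s=o(n^{1/4})$, $m=o(n^{1/4})$, $k=o(n^{1/6})$ there remain satisfied — the last of which is automatic since $k=O(\log n)$.
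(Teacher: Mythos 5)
Your proposal follows the paper's proof essentially step for step: restrict to the top $s=\lfloor\sqrt{k}\rfloor$ preferred vertices of vertex $1$, note that degree at most $\kappa$ forces at least $s-\kappa$ of them to have at least $k-s-O(1)$ of their disjointified edges appear before the $s^{\mathrm{th}}$ element of $\mathcal{E}_1$, apply Lemma~\ref{common-lem3} with $m\approx(s-\kappa)(k-s-1)$ and $t\approx s$ to get $\exp\left(-k+\tfrac12\sqrt{k}\log k+O(\sqrt{k})\right)$, and conclude by a union bound over the $n$ vertices with the exponent $(\tfrac12-t')\log\log(n)\sqrt{\log(n)}\to-\infty$. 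Two minor points, neither fatal: the implication in your first paragraph is stated in the direction suited to the lower-bound Lemma~\ref{prob-iso-lower-bound} (and with ``first'' rather than ``$s^{\mathrm{th}}$'' element of $\mathcal{E}_1$), but your second paragraph states the correct necessary condition that the argument actually uses; and your final computation assumes $k=O(\log n)$ although the hypothesis only bounds $k$ from below, a case the paper dispatches separately since the exponent only improves as $k$ grows.
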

\begin{proof}
We will bound the probability that vertex $1$ has degree less than $\kappa$ with the same technique used in the proof of Lemma \ref{bound-conditional-isolation}. Let $s=\lfloor \sqrt{k}\rfloor \leq k$ and for $i\in \mathcal{R}^{\leq s}_1$, $\mathcal{E'}_{i} = \mathcal{E}_{i} \setminus \{(i,j):j\in\mathcal{R}^{\leq s}_1\cup\{1\}\}$. Now any realization of $LK(n,k)$ induces a uniform distribution on the set of all permutations of the following union
\[\mathcal{E}' = \mathcal{E}_1 \bigcup_{i\in \mathcal{R}^{\leq s}_1} \mathcal{E'}_i, \]
with some order restrictions only within the elements of $\mathcal{E}_1$. The union above is a partition in which $|\mathcal{E}_1|=n-1$, and $|\mathcal{E'}_i|=n-s-1$ for all $i\in \mathcal{R}^{\leq s}_1$. We assign type 0 to the elements of $\mathcal{E}_1$ and type $i$ to the elements of $\mathcal{E'}_i$ for $i\in\mathcal{R}_1^{\leq s}$. Therefore, any realization of $LK(n,k)$ induces a uniformly random permutation over $\mathcal{E}'$ with some order restrictions on the elements of type 0.

A necessary condition to ensure that the degree of vertex 1 is bounded above by $\kappa$ is that for at least $s-\kappa$ elements $i\in \mathcal{R}^{\leq s}_1$, there exist at least $k$ edges of $\mathcal{E}_{i}$ appearing before $(1,i)$ in the permutation. As a result, those $k$ edges appear before the $s^{th}$ element of $\mathcal{E}_{1}$ too. This implies there must be at least $k-s-1$ edges of $\mathcal{E'}_{i}$ before the $s^{th}$ element of $\mathcal{E}_{1}$. As a result, in the first $(s-\kappa)(k-s-1)$ elements of this permutation, there are at most $s-1$ elements of type 0. Lemma \ref{common-lem3} bounds the probability of the event above by 

\[\exp\left(-k + \frac{1}{2}\sqrt{k}\log(k) + O(\sqrt{k})\right)\]
Then, using the union bound for sufficiently large $n$ we have:
\begin{align*}
        \mathbb{P}\big(\exists v:\deg(v)\leq  \kappa\big) & \leq n\times \mathbb{P}\big(\textrm{vertex $1$ is of degree at most } \kappa\big) \\
        & \leq n \exp\left(-k + \frac{1}{2}\sqrt{k}\log(k) + O(\sqrt{k})\right) \\
        &=\exp\left(\log(n) - k + \frac{1}{2}\sqrt{k}\log(k) + O(\sqrt{k})\right)
\end{align*}
If $k>O(\log(n))$, then the right-hand side obviously vanishes. In case of $k=O(\log(n))$, we use the fact that $ k - \frac{1}{2}\sqrt{k}\log(k)+O(\sqrt{k})$ is increasing for sufficiently large $k$, and replace $k$ by $\log(n)+  t'\log\log(n)\sqrt{\log(n)}$ in the last equality to obtain
\begin{align*}
        \exp\left( \big(0.5-t'\big)\log\log(n)\sqrt{\log(n)}+O\left(\sqrt{\log(n)}\right)\right) \rightarrow  0.
\end{align*}
Hence, the convergence holds.
\end{proof}
Theorem \ref{exclude-o(1)} excludes the possibility of having components of size $O(1)$. In other words, the following corollary holds.
\begin{cor}
Consider the random graphs model of class $\mathit{LK}(n,k)$ with $k \geq \log(n) +  t'\log\log(n)\sqrt{\log(n)} $ for a real number $t'>\tfrac{1}{2}$.  Then
\[\mathbb{P}\big(\exists \textrm{ a component of size at most } \kappa\big)\rightarrow 0\]
for any fixed $\kappa$ as $n\rightarrow\infty$. In particular when $k=\lfloor t\log(n) \rfloor$ for $t>1$ the limit above is still true.
\label{finite-comp}
\end{cor}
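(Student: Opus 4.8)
The plan is to obtain this as an immediate consequence of Theorem~\ref{exclude-o(1)}. The key structural observation is purely graph-theoretic: in any graph, a vertex that lies in a connected component of size $s$ has degree at most $s-1$. Consequently, if a realization of $\mathbb{G}(n,k)$ contains a connected component of size at most $\kappa$, then it necessarily contains a vertex of degree at most $\kappa-1$, and hence a fortiori a vertex of degree at most $\kappa$.

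First I would fix the integer $\kappa$ and apply Theorem~\ref{exclude-o(1)} verbatim with this same $\kappa$: under the hypothesis $k\geq \log(n)+t'\log\log(n)\sqrt{\log(n)}$ with $t'>\tfrac12$, the theorem gives
\[\mathbb{P}\{\exists\text{ a vertex of degree}\leq\kappa\}\to 0 .\]
By the observation above, the event $\{\exists\text{ a component of size}\leq\kappa\}$ is contained in $\{\exists\text{ a vertex of degree}\leq\kappa-1\}\subseteq\{\exists\text{ a vertex of degree}\leq\kappa\}$, so monotonicity of probability yields $\mathbb{P}\{\exists\text{ a component of size}\leq\kappa\}\to 0$, which is the first assertion.

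For the ``in particular'' clause I would simply check that $k=\lfloor t\log(n)\rfloor$ with $t>1$ meets the hypothesis of the first part. Since $t>1$ we have $k-\log(n)\geq(t-1)\log(n)-1$, and $(t-1)\log(n)$ grows faster than $\log\log(n)\sqrt{\log(n)}$; hence for every fixed $t'$ (in particular some $t'>\tfrac12$, e.g. $t'=1$) and all sufficiently large $n$ one has $k\geq\log(n)+t'\log\log(n)\sqrt{\log(n)}$. The first part of the corollary then applies and finishes the proof.

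There is essentially no analytical obstacle here: all the work has already been done in Theorem~\ref{exclude-o(1)}, and the corollary merely repackages it. The only points requiring (minimal) care are the elementary off-by-one relation between component size and vertex degree, and verifying that the regime $k=\lfloor t\log(n)\rfloor$ with $t>1$ does fall under the theorem's hypothesis on $k$ — which it does, with room to spare.
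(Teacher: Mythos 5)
Your proposal is correct and matches the paper's own argument: the paper likewise observes that a component of size at most $\kappa$ forces a vertex of degree at most $\kappa$ and then invokes Theorem~\ref{exclude-o(1)}. Your additional check that $k=\lfloor t\log(n)\rfloor$ with $t>1$ satisfies the hypothesis is a harmless (and slightly more careful) elaboration of the same route.
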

\begin{proof}
In case of having a component of size $\kappa$ there must be a vertex of degree at most $\kappa$, which is impossible with high probability due to Theorem \ref{exclude-o(1)}.
\end{proof}

Next, we establish a proof based on finding an Erd\H{o}s-R\'enyi sub-graph and super-graph of $\mathit{LK}(n,k)$ with both containing all the vertices. Recall that the order statistics of a set of \emph{i.i.d.} random variables $\{X_1,X_2,\cdots,X_n\}$ is defined as their non-increasing rearrangement $X^{(1)}\geq X^{(2)}\geq \cdots \geq X^{(n)}$. Recall that the $k^{th}$ order statistic of $\mathcal{V}_i$ is denoted by $V(i,R_i^k)$.
\begin{thm}
For the random graph model $\mathit{LK}(n,k)$ with $k=\lfloor t \log(n) \rfloor $, there is no component of size $\lceil \tfrac{8.24
}{t}\rceil \leq r \leq \lfloor \tfrac{n}{2}\rfloor$ with high probability.
\label{ER-estim}
\end{thm}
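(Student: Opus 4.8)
The plan is a first–moment argument over candidate components, built on the Erd\H{o}s--R\'enyi sandwich announced just before the statement.

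\textbf{Step 1 (the sandwich).} Take the scores $V(i,j)$ to be i.i.d.\ $\exp(1)$ and fix thresholds $\tau_\pm=\log\frac{n-1}{k}\mp c_0$ for a suitable constant $c_0$. Let $\mathcal A$ be the event that for \emph{every} vertex $i$ the $k$-th largest score in $\mathcal V_i$ lies in $[\tau_+,\tau_-]$. Since $\{V(i,R_i^k)>a\}$ is exactly the event that at least $k$ of the $n-1$ scores incident to $i$ exceed $a$, a Chernoff bound on a Binomial tail together with a union bound over the $n$ vertices give $\mathbb P(\mathcal A^c)\to 0$; forcing this tail to be $o(1/n)$ is precisely what constrains $c_0$ (and the admissible range of $t$), following the idea already used in~\cite{la2015new}. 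On $\mathcal A$ one has the sandwich $G(n,p_-)\subseteq\mathbb G(n,k)\subseteq G(n,p_+)$ with $p_\pm=e^{-\tau_\pm}=\frac{k}{n-1}e^{\pm c_0}$, so that $p_\pm=(1+o(1))\frac{c_\pm\log n}{n}$ with $c_\pm=t e^{\pm c_0}$.

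\textbf{Step 2 (decoupling isolation from internal connectivity).} Suppose $S\subseteq[n]$ with $|S|=r$ is a connected component of $\mathbb G$. On $\mathcal A$ two things hold at once: (i) $S$ has no edge of $G(n,p_-)$ leaving it, and (ii) the induced graph $G(n,p_+)[S]$ is connected. The key observation is that (i) is a function only of the scores of the $r(n-r)$ edges between $S$ and $S^{c}$, while (ii) is a function only of the scores of the $\binom r2$ edges inside $S$; these coordinates are disjoint and the scores are independent, so (i) and (ii) are \emph{independent}. Bounding (ii) by a union bound over the $r^{r-2}$ labelled spanning trees of $S$ (Cayley), each contained in $G(n,p_+)$ with probability $p_+^{\,r-1}$, bounding (i) by $(1-p_-)^{r(n-r)}$, and then pulling the single event $\mathcal A$ out of the union over $S$, I obtain
\[
\mathbb P\bigl\{\exists\text{ a component of size }r\bigr\}\ \le\ \mathbb P(\mathcal A^c)\ +\ \binom nr\, r^{r-2}\, p_+^{\,r-1}\,(1-p_-)^{r(n-r)}.
\]

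\textbf{Step 3 (the sum).} It remains to check that $\sum_{r=r_0}^{\lfloor n/2\rfloor}\binom nr r^{r-2}p_+^{\,r-1}(1-p_-)^{r(n-r)}\to 0$ for $r_0=\lceil 8.24/t\rceil$. Using $\binom nr\le(en/r)^r$, $p_+=O(\log n/n)$, and the crude bound $(1-p_-)^{r(n-r)}\le e^{-p_-r(n-r)}\le n^{-c_-r/2}$ (valid since $r(n-r)\ge rn/2$ for $r\le n/2$), the $r$-th summand is at most $r^{-2}e^{r}(C\log n)^{r-1}\,n^{\,1-c_-r/2}$ for an absolute constant $C$, whose logarithm is $\bigl(1-\tfrac{c_-}2 r\bigr)\log n+O(r\log\log n)$. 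This tends to $-\infty$ as soon as $\tfrac{c_-}2 r>1$, i.e.\ $r>2/c_-=2e^{c_0}/t$, and with $c_0$ chosen as in Step~1 one has $2e^{c_0}/t\le 8.24/t$; hence every term with $r\ge r_0$ vanishes, and for larger $r$ the factor $n^{-c_-r/2}=e^{-\Theta(r\log n)}$ swamps all other factors so the sum is dominated by its $r=r_0$ term. Combined with $\mathbb P(\mathcal A^c)\to 0$, this yields the theorem; components of size below $8.24/t$ are out of reach of this method and are instead excluded by Corollary~\ref{finite-comp}.

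\textbf{Main obstacle.} I expect two sticking points. Quantitatively, Step~1 is delicate: the sandwich must hold with $\mathbb P(\mathcal A^c)=o(1)$ \emph{after} a union bound over all $n$ vertices, which demands a sharp Chernoff estimate on the $k$-th order statistic of $n-1$ i.i.d.\ exponentials and, through $c_-=te^{-c_0}$, is what pins down the constant $8.24$. Conceptually, the subtler point is that one cannot run the first moment on ``$S$ is isolated'' alone --- for small $r$ the quantity $\binom nr(1-p_-)^{r(n-r)}$ fails to go to $0$ once $c_-<2$ --- so one must additionally use that a component is internally connected, and in order to \emph{multiply} the isolation probability by the internal-connectivity probability one needs these two events independent; this is exactly why the full two-sided sandwich is required (the subgraph $G(n,p_-)$ controls the crossing edges, the supergraph $G(n,p_+)$ the internal edges, and they live on disjoint coordinates), rather than just the subgraph $G(n,p_-)\subseteq\mathbb G$.
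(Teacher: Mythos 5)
Your proposal is correct and takes essentially the same route as the paper: the paper invokes Lemma A.1 of \cite{la2015new} for your Step 1 with the fixed window $c_0=\sqrt{2}$ (so its $\underline{p}=\tfrac{t\log n}{n-1}e^{\sqrt 2}$ and $\bar{p}=\tfrac{t\log n}{n-1}e^{-\sqrt 2}$ play the roles of your $p_+$ and $p_-$, and $2e^{\sqrt 2}\approx 8.23\le 8.24$ pins down the constant exactly as you indicate), and then runs precisely your first-moment bound $\sum_r \binom{n}{r}r^{r-2}\underline{p}^{\,r-1}(1-\bar{p})^{r(n-r)}$ over $\lceil 8.24/t\rceil\le r\le \lfloor n/2\rfloor$, dominated by its first term. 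The decomposition (Cayley trees inside via the supergraph, no crossing edges via the subgraph, independence of the disjoint edge sets) and the final geometric summation match the paper's proof step for step.
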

\begin{proof}
First, we refer to Lemma A.1 from \cite{la2015new}, which states that if the scores \( V(i,j) \) are independently distributed as exponential random variables with parameter \(1\), and we define
\[A_n = \left\{\forall \; 1\leq i\leq n: \;V(i,R_i^k)\in \left(\log\left(\frac{n-1}{t\log(n)}\right)-\sqrt{2} ,  \log\left(\frac{n-1}{t\log(n)}\right)+\sqrt{2}   \right)  \right\},\]
then $\mathbb{P}(A_n)\rightarrow 1$ as $n\rightarrow\infty$. We let $\underline{l}=\log(\frac{n-1}{t\log(n)})-\sqrt{2}$ and $\bar{l} = \log(\frac{n-1}{t\log(n)})+\sqrt{2}$. Next, note that 
\begin{align*}
\bar{p} &:= \mathbb{P}\big(V(i,j)> \bar{l}\big) = \frac{t\log(n)}{n-1}e^{-\sqrt{2}}\approx\frac{0.2431t\log(n)}{n-1},\\
\underline{p} &:= \mathbb{P}\big(V(i,j)> \underline{l}\big)= \frac{t\log(n)}{n-1}e^{\sqrt{2}}\approx \frac{4.1133t\log(n)}{n-1}.
\end{align*}
Additionally, if the graph meets condition \(A_n\) (which holds \textit{with high probability}), then \( V(i,j) > \bar{l} \) indicates that the edge \((i,j)\) exists, while \( V(i,j) < \underline{l} \) ensures that the edge \((i,j)\) does not exist.

Let the graph satisfy condition $A_n$. In order to have a component of size $r$, one must choose $r$ vertices containing at least one spanning tree. Moreover, according to Cayley's formula, there are $r^{r-2}$ possible trees with $r$ vertices. Thus, the $r-1$ edges of the tree must have scores no less than $\underline{l}$ which occurs with probability $\underline{p}^{r-1}$ due to independence. Additionally, we require that those $r$ vertices are not connected to the rest of the graph, implying that the scores for the intermediate edges between the component and the rest of the graph must be less than $\bar{l}$. Note that both of these are necessary conditions. Counting the number of possible components yields the following upper bound:
\begin{align*}
\Pi & =\mathbb{P}\left(\exists \textrm{ a component of size  between } \lceil 8.24/t \rceil \textrm{ and } \lfloor n/2 \rfloor \right) \\
& \leq\sum_{r=\lceil 8.24/t\rceil}^{\lfloor n/2\rfloor}\binom{n}{r}r^{r-2}\underline{p}^{r-1}(1-\bar{p})^{r(n-r)}.
\end{align*}
Substituting $\binom{n}{r}<\tfrac{n^r}{r!}<n^r/\sqrt{2\pi r}(r/e)^r$ implies
\begin{align*}
        \Pi & <  \frac{1}{\sqrt{2\pi}} \sum_{r=\lceil \frac{8.24
}{t}\rceil}^{\lfloor n/2\rfloor}n^r r^{-r-1/2} e^r r^{r-2}\underline{p}^{r-1}(1-\bar{p})^{r(n-r)}\\
         & <  \frac{1}{\sqrt{2\pi}} \sum_{r=\lceil \frac{8.24
}{t}\rceil}^{\lfloor n/2\rfloor}e^r \frac{n^r}{\underline{p}} r^{-5/2}\underline{p}^{r}e^{-r(n-r)\bar{p}}\quad (\textrm{using}\;1-x<e^{-x})\\
         & <  \frac{n}{\sqrt{2\pi}} \sum_{r=\lceil \frac{8.24
}{t}\rceil}^{\lfloor n/2\rfloor}n^r e^r \underline{p}^{r}e^{-r\bar{p}n/2}\quad (\textrm{using}\;1/\underline{p}<n, \; n-r>n/2, \; r^{-5/2}<1)\\
         & < \frac{n}{\sqrt{2\pi}} \sum_{r=\lceil \frac{8.24
}{t}\rceil}^{\lfloor n/2\rfloor} e^{r(1 + \log(n\underline{p}) - n\bar{p}/2)}.
\end{align*}
As $t$ is a fixed number, the dominant term in $1 + \log(n\underline{p}) - n\bar{p}/2$ is $- n\bar{p}/2\approx -0.1216 t\log(n)$. As a result, for sufficiently large $n$, $1 + \log(n\underline{p}) - n\bar{p}/2 < -0.1215t\log(n)$. Thus, we have
\begin{align*}
        \Pi & < \frac{n}{\sqrt{2\pi}} \sum_{r=\lceil \frac{8.24
}{t}\rceil}^{\infty} e^{-0.1215tr\log(n)}\\
         & =  \frac{n}{\sqrt{2\pi}} \sum_{r=\lceil \frac{8.24
}{t}\rceil}^{\infty} n^{-0.1215tr}\\
         & = \frac{n^{1 - 0.1215t\lceil \frac{8.24
}{t}\rceil}}{\sqrt{2\pi}(1-n^{- 0.1215t})} \\
 & \leq  \frac{n^{- 0.00116}}{\sqrt{2\pi}(1-n^{- 0.1215t})}\rightarrow_{n\rightarrow\infty} 0.
\end{align*}
Therefore, \textit{with high probability} there is no component of size between $\lceil 8.24/t\rceil$ and $n/2$. 
\end{proof}
A key feature of Theorem \ref{ER-estim} is its validity for any positive $t$. This means that if $k\approx t\log(n)=\Theta(\log(n))$, one should only examine components of size less than $\lceil \frac{8.24
}{t}\rceil=O(1)$ to study the disconnectedness of the graph. Since Corollary \ref{finite-comp} addresses components of size $O(1)$, we now assert the main theorem.
\begin{thm}\label{thm:connected}
The random graph model $\mathit{LK}(n,k)$ with $k \geq \log( n )+  t'\log\log (n)\sqrt{\log(n)} $ for a real number $t'>\tfrac{1}{2}$ is connected with high probability. In particular, if $k=\lfloor t\log(n)\rfloor$ for $t>1$, connectivity holds with high probability. 
\end{thm}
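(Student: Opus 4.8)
The plan is to show that disconnection is whp impossible by a first‑moment estimate on the number of components of ``moderate'' size, then to absorb the tiny components into an earlier result. If $\mathbb{G}(n,k)$ is disconnected, a smallest component $C$ satisfies $1\le |C|\le\lfloor n/2\rfloor$, so it suffices to rule out every such $C$. I would fix a constant integer $\kappa$ (pinned down below) and split into two regimes: $|C|\le\kappa$, and $\kappa<|C|\le\lfloor n/2\rfloor$. The first regime needs nothing new: the hypothesis $k\ge\log n+t'\log\log n\sqrt{\log n}$ with $t'>1/2$ is exactly the hypothesis of Corollary~\ref{finite-comp}, which already gives $\mathbb{P}\{\exists\text{ a component of size }\le\kappa\}\to0$.

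For the moderate regime I would re‑use the machinery of Theorem~\ref{ER-estim}. First note that $\mathbb{G}(n,k)$ is monotone increasing in $k$ (on a common score assignment, $j\in\mathcal{R}_i^{\le k}$ is increasing in $k$, so $\mathbb{G}(n,k)\subseteq\mathbb{G}(n,k')$ whenever $k\le k'$), so connectivity is monotone; moreover, if $k/\log n\to\infty$ the first‑moment bound below only gets stronger and if $k=\Omega(n)$ the graph is dense, so the delicate window is $k=\log n\,(1+o(1))$ forced by the hypothesis. In that window I would run the proof of Theorem~\ref{ER-estim} with the slowly varying parameter $t=t(n):=k/\log n>1$: the concentration event $A_n$ of Lemma~A.1 in~\cite{la2015new} for the $k$-th order statistic of $n-1$ i.i.d.\ $\mathrm{Exp}(1)$ scores still holds whp, producing thresholds $\underline l<\bar l$ with $\bar p,\underline p=\Theta(k/n)$, and Cayley's formula together with $\binom{n}{r}<n^r/r!$ gives
\[
\mathbb{P}\{\exists\,C:\ \kappa\le|C|\le\lfloor n/2\rfloor\}\ \le\ \frac{n}{\sqrt{2\pi}}\sum_{r\ge\kappa}\exp\!\big(r\,(1+\log(n\underline p)-n\bar p/2)\big).
\]
Since $n\bar p/2\asymp k\ge\log n$ while $\log(n\underline p)=O(\log\log n)=o(\log n)$, the per‑$r$ exponent is at most $-c\log n$ for a fixed $c>0$ and all large $n$, so the geometric sum is at most $n^{\,1-c\kappa}/\big(\sqrt{2\pi}(1-n^{-c})\big)\to0$ provided the fixed integer $\kappa$ exceeds $1/c$. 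Taking, say, $\kappa=9$ (consistent with $\lceil 8.24/t\rceil\le 9$ for $t>1$) and combining with Corollary~\ref{finite-comp} for that same $\kappa$ yields $\mathbb{P}\{\mathbb{G}(n,k)\text{ disconnected}\}\to0$; for the special case $k=\lfloor t\log n\rfloor$ with $t>1$ fixed this is literally Corollary~\ref{finite-comp} (with $\kappa=\lceil 8.24/t\rceil$) together with Theorem~\ref{ER-estim}.

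The main obstacle is precisely this adaptation of Theorem~\ref{ER-estim}: as stated it fixes $t$, whereas $k\ge\log n+t'\log\log n\sqrt{\log n}$ drives us into the borderline window $t=t(n)\to1$ not covered by the verbatim statement. Two points need care there — that Lemma~A.1 of~\cite{la2015new} yields concentration of the $k$-th largest of $n-1$ i.i.d.\ $\mathrm{Exp}(1)$ variables uniformly over this window (it does; the concentration width in fact shrinks as $k\to\infty$, only improving the constants), and that the dominant‑term estimate $1+\log(n\underline p)-n\bar p/2<-c\log n$ survives with $t$ varying. Both are routine; everything else (the component‑size split, the spanning‑tree count, the summation) is identical to the proof of Theorem~\ref{ER-estim}, so no new idea beyond combining the two earlier results is required.
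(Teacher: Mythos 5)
Your proposal follows essentially the same route as the paper: rule out components of size at most a fixed $\kappa$ via Corollary~\ref{finite-comp}, and rule out components of size between $\kappa$ and $\lfloor n/2\rfloor$ via the first-moment/Cayley estimate of Theorem~\ref{ER-estim} applied with $t=k/\log n$. If anything you are more careful than the paper's own two-line argument, since you explicitly flag the monotonicity in $k$ and the need for the Theorem~\ref{ER-estim} bound to hold uniformly as $t(n)\to 1$, points the paper leaves implicit.
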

\begin{proof}
If a graph from class $\mathit{LK}(n,k)$ is disconnected, then it should have at least a component of size $s$ where $1\leq s \leq \lfloor n/2 \rfloor $. Since $\lceil \tfrac{8.24}{1} \rceil=9$, Theorem \ref{ER-estim} rules out the possibility of having components of size between 9 and $\lfloor n/2 \rfloor$ with high probability. Moreover, Corollary \ref{finite-comp} guarantees that the probability of having components of size at most $8$ vanishes, when $n$ grows to infinity. Therefore, the graph will be connected \textit{with high probability}. 
\end{proof}

\section{Average Degree of $\mathit{LK}(n,k)$ Graphs}\label{sec:avgdegree}

Here, we discuss another set of results for $\mathit{LK}(n,k)$ random graphs. From the construction it is immediate that the degree sequence is bounded above by $k$. Therefore, the average degree is also constrained by $k$. However, we will show that for a large range of $k$ (as a function of $n$), this number is very close to $k$ by specifying the error term. Finally, we compare our result to the results on the sparse case in \cite{moharrami2024erlang}. 

Before presenting the main theorem we recall the negative binomial distribution. Imagine an urn with an infinite number of red and blue balls, where each draw results in a red ball with probability $p$. The probability of drawing $j$ blue balls before the $k^{th}$ red ball is a negative binomial distribution given by the following formula $\mathbb{P}(X_k=j)=\binom{k+j-1}{j}p^k(1-p)^{j}$. We need to introduce two combinatorial Lemmas before stating the main Theorem. 
\begin{lem}
    \[\sum_{j=0}^{k-1} \binom{k+j-1}{j} \frac{1}{2^{k+j}}=\frac{1}{2}\]
    \label{negative-binom-mean1}
\end{lem}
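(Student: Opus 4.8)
The plan is to read the sum probabilistically, exactly in the spirit of the proof of Lemma~\ref{binom}. Let $X_k$ be the number of blue balls drawn before the $k$-th red ball in the urn model just described, specialized to $p=\tfrac12$, so that $\mathbb{P}\{X_k=j\}=\binom{k+j-1}{j}2^{-(k+j)}$. Then the left-hand side of the claimed identity is precisely $\mathbb{P}\{X_k\le k-1\}$, and it suffices to show this tail probability equals $\tfrac12$.

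Next I would translate the event $\{X_k\le k-1\}$ into a statement about a fixed number of draws. Since the $k$-th red ball sits at position $X_k+k$, we have $X_k\le k-1$ if and only if the $k$-th red ball has appeared by draw $2k-1$, i.e.\ if and only if among the first $2k-1$ draws there are at least $k$ red balls. Write $R$ for the number of red balls and $B=2k-1-R$ for the number of blue balls among these first $2k-1$ draws, so that $\mathbb{P}\{X_k\le k-1\}=\mathbb{P}\{R\ge k\}$.

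The key step is a symmetry-plus-parity argument. Because each of the $2k-1$ draws is red or blue with probability $\tfrac12$, exchanging the two colours gives $\mathbb{P}\{R\ge k\}=\mathbb{P}\{B\ge k\}$. Since $R+B=2k-1$ is odd, the events $\{R\ge k\}$ and $\{B\ge k\}$ are complementary — exactly one of them occurs in every outcome — so they partition the sample space and each must have probability $\tfrac12$. Hence $\mathbb{P}\{X_k\le k-1\}=\tfrac12$, which is the identity.

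I do not expect a real obstacle here; the only points that warrant a line of justification are the equivalence $\{X_k\le k-1\}=\{R\ge k\text{ among the first }2k-1\text{ draws}\}$ and the observation that $\{R\ge k\}$ and $\{B\ge k\}$ are complementary, which uses the oddness of $2k-1$. A purely algebraic alternative, pairing the generating-function identity $\sum_{j\ge 0}\binom{k+j-1}{j}x^{j}=(1-x)^{-k}$ against the tail terms with $j\ge k$, also works but is messier, so I would present the probabilistic argument.
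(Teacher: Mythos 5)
Your proof is correct and follows essentially the same route as the paper: interpret the sum as the probability that the $k$-th red ball precedes the $k$-th blue ball (equivalently $X_k\le k-1$) and conclude by red/blue symmetry that this probability is $\tfrac12$. Your reformulation via the first $2k-1$ draws and the parity observation merely spells out why the two symmetric events are complementary, which the paper leaves implicit under ``by symmetry.''
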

\begin{proof}
     Let $X_k$ denote the negative binomial random variable with $p=\frac{1}{2}$. Note that $\mathbb{P}(X_k<k)=\sum_{j=0}^{k-1} \binom{k+j-1}{j} \frac{1}{2^{k+j}}$ is the probability of observing the $k^{th}$ red ball earlier than the $k^{th}$ blue ball in the infinite urn model. By symmetry this value must be $1/2$.
\end{proof}
\begin{lem}
    \[\sum_{j=0}^{k-1}\binom{k+j-1}{j}\left(\frac{j}{2^{k+j}}\right)=\frac{k}{2}-\frac{(2k-1)}{2^{2k-1}}\binom{2k-2}{k-1}\]
    \label{negative-binom-mean2}
\end{lem}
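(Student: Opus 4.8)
The plan is to recognize the left-hand side as a truncated first moment of a negative binomial law and reduce it to an instance of Lemma~\ref{negative-binom-mean1}. Write $X_k$ for the negative binomial variable with $p=\tfrac12$ and $k$ successes, so that $\mathbb{P}\{X_k=j\}=\binom{k+j-1}{j}2^{-(k+j)}$ and the quantity in question is $\sum_{j=0}^{k-1} j\,\mathbb{P}\{X_k=j\}$. The first step is the elementary absorption identity $j\binom{k+j-1}{j}=k\binom{k+j-1}{j-1}$, valid for $j\ge 1$, which turns the sum into $k\sum_{j=1}^{k-1}\binom{k+j-1}{j-1}2^{-(k+j)}$. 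Reindexing by $i=j-1$ gives $k\sum_{i=0}^{k-2}\binom{k+i}{i}2^{-(k+i+1)}$, and here the key observation is that $\binom{k+i}{i}2^{-(k+i+1)}=\binom{(k+1)+i-1}{i}2^{-((k+1)+i)}=\mathbb{P}\{X_{k+1}=i\}$, the pmf of the negative binomial with $k+1$ successes. Hence the whole sum equals $k\,\mathbb{P}\{X_{k+1}\le k-2\}$.

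Next I evaluate $\mathbb{P}\{X_{k+1}\le k-2\}$. Applying Lemma~\ref{negative-binom-mean1} with $k$ replaced by $k+1$ gives $\mathbb{P}\{X_{k+1}\le k\}=\tfrac12$, so $\mathbb{P}\{X_{k+1}\le k-2\}=\tfrac12-\mathbb{P}\{X_{k+1}=k-1\}-\mathbb{P}\{X_{k+1}=k\}=\tfrac12-\binom{2k-1}{k-1}2^{-2k}-\binom{2k}{k}2^{-(2k+1)}$. (Alternatively this probability can be read directly off the urn model: $\{X_{k+1}\le k-2\}$ is exactly the event that the $(k+1)$-st red ball appears among the first $2k-1$ draws, i.e. $\{\mathrm{Bin}(2k-1,\tfrac12)\ge k+1\}$, whose probability is $\tfrac12-\binom{2k-1}{k-1}2^{-(2k-1)}$ by the symmetry of $\mathrm{Bin}(2k-1,\tfrac12)$ about its two central values $k-1,k$; this is self-contained and does not even invoke Lemma~\ref{negative-binom-mean1}.) The final step is pure bookkeeping with binomial coefficients: using $\binom{2k}{k}=2\binom{2k-1}{k-1}$ the two correction terms collapse to $\binom{2k-1}{k-1}2^{-(2k-1)}$, and then $k\binom{2k-1}{k-1}=(2k-1)\binom{2k-2}{k-1}$ (since $\binom{2k-1}{k-1}=\tfrac{2k-1}{k}\binom{2k-2}{k-1}$). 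Multiplying through by $k$ therefore yields $\tfrac k2-\dfrac{2k-1}{2^{2k-1}}\binom{2k-2}{k-1}$, which is the claimed identity.

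There is no real obstacle here; the only points requiring care are the reindexing (so that the summand becomes the pmf of $X_{k+1}$ rather than of $X_k$, which is what makes Lemma~\ref{negative-binom-mean1} applicable at parameter $k+1$) and the final algebraic collapse of the two boundary terms. A quick check at $k=1,2$ confirms the formula.
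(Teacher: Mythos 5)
Your proof is correct, and it takes a genuinely different route from the paper's. The paper uses the identity $j\binom{k+j-1}{j}=(k+j-1)\binom{k+j-2}{j-1}$, splits the factor $k+j-1$ into $k$ and $j-1$, and thereby obtains a \emph{self-referential} linear equation $A=\tfrac{k}{4}+\tfrac{A}{2}-\tfrac{2k-1}{2^{2k-2}}\binom{2k-2}{k-1}$ (invoking Lemma~\ref{negative-binom-mean1} at parameter $k$ for the first piece), which it then solves for $A$. You instead use the absorption identity $j\binom{k+j-1}{j}=k\binom{k+j-1}{j-1}$, so the truncated mean becomes, after reindexing, exactly $k\,\mathbb{P}\{X_{k+1}\le k-2\}$ for the negative binomial with $k+1$ successes; you then evaluate this tail either via Lemma~\ref{negative-binom-mean1} applied at parameter $k+1$ plus two boundary terms, or entirely self-containedly via the symmetry of $\mathrm{Bin}(2k-1,\tfrac12)$, and finish with $k\binom{2k-1}{k-1}=(2k-1)\binom{2k-2}{k-1}$. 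All of these steps check out (and the identity verifies at $k=1,2$). What your route buys is directness: no fixed-point manipulation of the unknown sum is needed, and your binomial-symmetry variant even removes the dependence on Lemma~\ref{negative-binom-mean1} altogether, giving a purely probabilistic one-line evaluation of the tail; the paper's recursion, on the other hand, stays at the single parameter $k$ and avoids introducing the shifted variable $X_{k+1}$.
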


\begin{proof}
    Using Lemma \ref{negative-binom-mean1}, one can write the sum above as follows 
    \begin{align*}
        A & = \sum_{j=0}^{k-1}\binom{k+j-1}{j}\left(\frac{j}{2^{k+j}}\right)\\
        & = \sum_{j=1}^{k-1}\binom{k+j-2}{j-1}\left(\frac{k+j-1}{2^{k+j}}\right)\\
        & = \left(\frac{k}{2}\right)\sum_{j=1}^{k-1}\binom{k+j-2}{j-1}\frac{1}{2^{j+k-1}} + \left(\frac{1}{2}\right)\sum_{j=1}^{k-1}(j-1)\binom{k+j-2}{j-1}\frac{1}{2^{j+k-1}}\\
        & = \left(\frac{k}{2}\right)\left[\frac{1}{2}-\binom{2k-2}{k-1}\frac{1}{2^{2k-1}}\right] + \left(\frac{1}{2}\right)\left[A - (k-1)\binom{2k-2}{k-1}\frac{1}{2^{2k-1}} \right]\\
        & = \frac{k}{4}+\frac{A}{2} - \frac{(2k-1)}{2^{2k-2}}\binom{2k-2}{k-1}.
    \end{align*}
    We now solve the resulting equation for $A$ to get the desired expression.
\end{proof}

\begin{thm}\label{thm:genasymptotic}
In the model $\mathit{LK}(n,k)$ suppose $D$ represents the degree of a randomly chosen vertex. If $k=o(\sqrt{n})$ then we have the following asymptotic
\begin{equation}
    \mathbb{E}[D]=k - \left[ \frac{(2k-1)}{2^{2k-1}}\binom{2k-2}{k-1} \right](O(k^2/n)+1)
    \label{original-ave-deg-eq}
\end{equation}
\label{original-ave-deg}
\end{thm}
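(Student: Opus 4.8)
The plan is to evaluate $\mathbb{E}\{D\}$ as an exact finite sum and then extract its asymptotics. By vertex-symmetry take $D$ to be the degree of vertex $1$; then, by linearity of expectation, $\mathbb{E}\{D\}=(n-1)\,\mathbb{P}\{\{1,2\}\text{ is an edge}\}$, and $\{1,2\}$ is an edge exactly on the bilateral-agreement event $\{2\in\mathcal{R}_1^{\leq k}\}\cap\{1\in\mathcal{R}_2^{\leq k}\}$. Taking the scores uniform on $[0,1]$ and conditioning on $V(1,2)=1-u$, the events $\{2\in\mathcal{R}_1^{\leq k}\}$ and $\{1\in\mathcal{R}_2^{\leq k}\}$ become independent --- they concern the disjoint edge-families $\mathcal{E}_1\setminus\{(1,2)\}$ and $\mathcal{E}_2\setminus\{(1,2)\}$, each of size $n-2$ --- and each coincides with $\{\mathrm{Bin}(n-2,u)\leq k-1\}$. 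So, with $G(u):=\mathbb{P}\{\mathrm{Bin}(n-2,u)\leq k-1\}=\sum_{a=0}^{k-1}\binom{n-2}{a}u^a(1-u)^{n-2-a}$, we have $\mathbb{E}\{D\}=(n-1)\int_0^1 G(u)^2\,du$. Since $\int_0^1\binom{n-2}{a}u^a(1-u)^{n-2-a}\,du=\tfrac1{n-1}$ for every $a$, we get the exact identity $\int_0^1 G=\tfrac{k}{n-1}$, whence
\[
\mathbb{E}\{D\}=k-(n-1)\int_0^1 G(u)\bigl(1-G(u)\bigr)\,du=:k-C,
\]
and it suffices to prove $C=\bigl[\tfrac{2k-1}{2^{2k-1}}\binom{2k-2}{k-1}\bigr]\bigl(1+O(k^2/n)\bigr)$.

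For $C$, expand $1-G(u)=\sum_{b=k}^{n-2}\binom{n-2}{b}u^b(1-u)^{n-2-b}$ and integrate term by term using $\int_0^1 u^{a+b}(1-u)^{2(n-2)-a-b}\,du=\bigl[(2n-3)\binom{2n-4}{a+b}\bigr]^{-1}$, giving the exact expression
\[
C=\frac{n-1}{2n-3}\sum_{a=0}^{k-1}\sum_{b=k}^{n-2}\frac{\binom{n-2}{a}\binom{n-2}{b}}{\binom{2n-4}{a+b}}.
\]
The crux is the comparison of this hypergeometric-type ratio with the fair Bernoulli weight: by a Stirling-type (falling-factorial) expansion,
\[
\frac{\binom{n-2}{a}\binom{n-2}{b}}{\binom{2n-4}{a+b}}=\binom{a+b}{a}\,2^{-(a+b)}\bigl(1+O((a+b)^2/n)\bigr),
\]
uniformly once $a+b=O(k)=o(\sqrt n)$; this is precisely where $k=o(\sqrt n)$ is used and where the $O(k^2/n)$ factor is born (together with $\tfrac{n-1}{2n-3}=\tfrac12(1+O(1/n))$). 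The dominant part of the sum sits at $b\in[k,2k)$, where $a+b=O(k)$, while the tail $b\geq 2k$ is negligible --- there both $\binom{a+b}{a}2^{-(a+b)}$ and the exact ratio are exponentially small in $k$, and for bounded $k$ the full $b$-sum can instead be treated exactly via $\sum_{b=0}^{n-2}\binom{n-2}{a}\binom{n-2}{b}/\binom{2n-4}{a+b}=\tfrac{2n-3}{n-1}$.

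It remains to compute $\sum_{a=0}^{k-1}\sum_{b\geq k}\binom{a+b}{a}2^{-(a+b)}$ in closed form. In the fair red/blue urn model recalled above, $\binom{a+b}{a}2^{-(a+b)}=2\,\mathbb{P}\{Y_{a+1}=b\}$ where $Y_{a+1}$ is the number of blue balls before the $(a+1)$-th red ball, so $\tfrac12\sum_{b\geq k}\binom{a+b}{a}2^{-(a+b)}=\mathbb{P}\{Y_{a+1}\geq k\}=\sum_{j=0}^{a}\binom{k+j-1}{j}2^{-(k+j)}$, the probability that at most $a$ red balls precede the $k$-th blue ball. Interchanging the order of summation,
\[
\sum_{a=0}^{k-1}\sum_{b\geq k}\binom{a+b}{a}2^{-(a+b)}=2\sum_{j=0}^{k-1}(k-j)\binom{k+j-1}{j}2^{-(k+j)},
\]
and Lemma~\ref{negative-binom-mean1} and Lemma~\ref{negative-binom-mean2} evaluate the right-hand side to $2\bigl[k\cdot\tfrac12-\bigl(\tfrac{k}{2}-\tfrac{2k-1}{2^{2k-1}}\binom{2k-2}{k-1}\bigr)\bigr]=\tfrac{2k-1}{2^{2k-2}}\binom{2k-2}{k-1}$. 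Combining with the previous paragraph, $C=\tfrac12\bigl(1+O(k^2/n)\bigr)\tfrac{2k-1}{2^{2k-2}}\binom{2k-2}{k-1}=\tfrac{2k-1}{2^{2k-1}}\binom{2k-2}{k-1}\bigl(1+O(k^2/n)\bigr)$, and $\mathbb{E}\{D\}=k-C$ is the claim.

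The main obstacle is the ratio estimate together with the error bookkeeping around it: proving $\binom{n-2}{a}\binom{n-2}{b}/\binom{2n-4}{a+b}=\binom{a+b}{a}2^{-(a+b)}(1+O(k^2/n))$ uniformly on the relevant range, controlling the $b\geq 2k$ tail, and verifying that every accumulated error --- including the $\tfrac{n-1}{2n-3}$ prefactor --- collapses into one multiplicative $(1+O(k^2/n))$ attached to the correction $\tfrac{2k-1}{2^{2k-1}}\binom{2k-2}{k-1}=\Theta(\sqrt k)$. The remaining ingredients (the reduction to $(n-1)\mathbb{P}\{\{1,2\}\text{ is an edge}\}$, the conditioning, the Beta integrals, and the closed-form negative-binomial summation) are routine.
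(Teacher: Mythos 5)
Your derivation is correct in its main line, but it takes a genuinely different route from the paper's proof. The paper stays inside the permutation model: fixing $R_1^i=i+1$, it estimates, for each $1\le i\le k$, the probability $\mathbb{P}\{E_i=0\}$ that vertex $1$ fails to connect to its $i$-th choice by counting (via the urn asymptotics behind Lemma~\ref{common-lem1}) how many elements of $\mathcal{E}_1$ precede the $k$-th element of $\mathcal{E}_{i+1}$, obtaining $\bigl[\sum_{j=0}^{i-1}\binom{k+j-1}{j}2^{-(k+j)}\bigr](1+O(k^2/n))$, and then sums over $i$ and applies Lemmas~\ref{negative-binom-mean1} and~\ref{negative-binom-mean2}; all sums there are finite by construction, so no tail truncation is needed. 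You instead condition on the shared score $V(1,2)$, which makes the two proposal events conditionally independent, obtain the exact identity $\mathbb{E}\{D\}=(n-1)\int_0^1 G(u)^2\,du$ with $G$ the lower CDF of a Binomial$(n-2,u)$, and convert it by Beta integrals into an exact hypergeometric-type double sum, so the only asymptotic step is the single ratio estimate $\binom{n-2}{a}\binom{n-2}{b}/\binom{2n-4}{a+b}=\binom{a+b}{a}2^{-(a+b)}\bigl(1+O((a+b)^2/n)\bigr)$; the closed form is then extracted with the same two negative-binomial lemmas. This is in effect a finite-$n$ version of the computation the paper carries out in the remark after Theorem~\ref{original-ave-deg} starting from the Erlang integral of Moharrami et al.; what it buys is an exact pre-asymptotic formula with the error isolated in one transparent place, at the price of having to control an unbounded range of $b$.

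The one step to tighten is precisely that $b\ge 2k$ range. Bounding the exact and the approximate tails separately by quantities that are ``exponentially small in $k$'' suffices only when $e^{-ck}=O(k^{5/2}/n)$, i.e.\ roughly $k\gtrsim \log n$, and your exact-identity fallback covers bounded $k$; but for $k\to\infty$ slowly (say $k=o(\log n)$) neither remark, as stated, produces an error absorbable into $\bigl(1+O(k^2/n)\bigr)\cdot\Theta(\sqrt{k})$, and using the identity alone for growing $k$ gives only an additive $O(k^3/n)$, which is weaker than the claimed bound and would also degrade Theorem~\ref{thm:meandegree}. The clean fix is to bound the difference of the two tails rather than each one separately: for $2k\le b\le \epsilon\sqrt{n}$ the exact and approximate terms differ by a factor $1+O(b^2/n)$, so the discrepancy per $a$ is at most $\tfrac{C}{n}\sum_{b\ge 2k} b^2\binom{a+b}{a}2^{-(a+b)}$, which retains a negative-binomial tail factor $e^{-ck}$ when $k\to\infty$ and is $O(1/n)$ for bounded $k$, while for $b>\epsilon\sqrt{n}$ both tails are superpolynomially small by Chernoff-type bounds for the hypergeometric and the negative binomial. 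With that adjustment your argument delivers the theorem in full strength.
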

\begin{proof}
Without loss of generality assume the randomly chosen vertex is vertex 1 and $R_1^i = i+1$ for all $1\leq i\leq n-1$. Let $E_i$ denote the indicator function for whether vertex 1 is connected to vertex $i+1$. To prevent the graph from having edge $(1,i+1)$, vertex $i+1$ must disagree on vertex 1. This is equivalent to appearing at least $k$ elements of $\mathcal{E}_{i+1}$ before the $i^{th}$ element of $\mathcal{E}_1$, which is $(1,i+1)$, in the permutation. Hence, there could be at most $i$ elements of $\mathcal{E}_1$ before the $k^{th}$ element of $\mathcal{E}_{i+1}$. Any realization of $LK(n,k)$ induces a uniformly random permutation on $\mathcal{E}_1\cup \mathcal{E}_{i+1}$ subject to the order restriction $(1,2)\succ (1,3)\succ\cdots\succ (1,n)$. By employing a similar approach as in the proof of Lemma \ref{common-lem2}, we can use combinatorial methods to calculate the probability of observing $j$ elements of $\mathcal{E}_1$ earlier than the $k^{th}$ element of $\mathcal{E}_{i+1}$ for $j\leq k$ as follows: 
\begin{align*}
    \binom{k+j-1}{j}\frac{(n-1)(n-2)\cdots(n-k-j)}{(2n-3)(2n-4)\cdots(2n-k-j-2)} & = \binom{k+j-1}{j} \left(\frac{n+O(k)}{2n+O(k)}\right)^{k+j}\\ & = \binom{k+j-1}{j} \left(\frac{1}{2}\right)^{k+j}(1 + O(k^2/n))
\end{align*}

Varying $0\leq j \leq i-1$, it follows for each $1\leq i \leq k$ that:
\[\mathbb{P}( E_i = 0 ) = \left[\sum_{j=0}^{i-1} \binom{k+j-1}{j} \frac{1}{2^{k+j}}\right](1+O(k^2/n)).\]
 We now compute $\mathbb{E}[D]$ using Lemmas \ref{negative-binom-mean1} and \ref{negative-binom-mean2} as follows
\begin{align*}
        \mathbb{E}[D]  & = \sum_{i=1}^{n-1} \mathbb{E}[E_i] \\
        & = \sum_{i=1}^{n-1} \mathbb{P}(E_i = 1 ) \\
        & = \sum_{i=1}^{k} \mathbb{P}(E_i = 1 ) \\
        & = k - \sum_{i=1}^{k} \mathbb{P}(E_i = 0 )\\
        & = k - \sum_{i=1}^{k} \left[\sum_{j=0}^{i-1} \binom{k+j-1}{j} \frac{1}{2^{k+j}}\right](O(k^2/n)+1) \\
        & = k - \left[\sum_{j=0}^{k-1} \binom{k+j-1}{j} \frac{k-j}{2^{k+j}}\right](O(k^2/n)+1)\\
        & = k - \left[\frac{k}{2}-\sum_{j=0}^{k-1} \binom{k+j-1}{j} \frac{j}{2^{k+j}}\right](O(k^2/n)+1)\\
        & = k - \left[ \frac{(2k-1)}{2^{2k-1}}\binom{2k-2}{k-1} \right](O(k^2/n)+1). 
\end{align*}
This completes the proof.
\end{proof}

\smallskip
\noindent\textbf{Remark:} As mentioned earlier, the work  
\cite{moharrami2024erlang} by Moharrami \textit{et al.} considers the preference threshold to be a random variable independently chosen per vertex using a distribution $P$ over $\mathbb{N}$ with finite mean instead of just a fixed $k$ for the potential number of neighbors of a randomly chosen vertex. Further, 
\cite[Theorem 5.1]{moharrami2024erlang} 
specifies the following formula for the average degree in the limit of $n$ going to infinity 
\begin{equation}
    \mathbb{E}[D] = \sum_{i=1}^{\infty}\sum_{j=1}^{\infty} P(i)P(j)\int_0^{\infty}\bar{F}_i(x)\bar{F}_j(x)dx 
    \label{ave-deg-mehrdad}
\end{equation}
where $\bar{F}_i$ denotes the complementary 
cumulative distribution function of $\textrm{Erlang}(\cdot\, ;i,1)$ (the Erlang distribution of shape $i$ and rate $1$).

In the case of bilateral preference graphs with preference threshold parameter to be a fixed $k$, the probability distribution $P$ becomes a delta mass function on $k$, i.e., $P(i)=1$ if $i=k$, and $P(i)=0$ otherwise. In addition, the mean being finite as $n$ goes to infinity implies that $k$ must be finite and cannot grow to infinity. The sum in \eqref{ave-deg-mehrdad} now simplifies to
\begin{equation}
    \mathbb{E}[D] = \int_0^{\infty}\bar{F}_k^2(x)dx.
\end{equation}
Then using $\bar{F}_k(x)=\sum_{i=0}^{k-1}\frac{1}{i!}e^{-x}x^i$, distributing the square, and exchanging the finite sum with integral, we obtain
\begin{align}
    \mathbb{E}[D] & = \sum_{0\leq i,j\leq k-1} \int_0^{\infty} \frac{x^{i+j}}{i!j!}e^{-2x}dx\notag\\
    & = \sum_{0\leq i,j\leq k-1} \int_0^{\infty} \frac{(2x)^{i+j}}{2^{i+j}i!j!}e^{-2x} dx\notag\\
    & = \sum_{0\leq i,j\leq k-1} \frac{\Gamma(i+j+1)}{2^{i+j+1} i! j!}\notag\\
    & = \frac{1}{2}\sum_{0\leq i,j\leq k-1} \frac{1}{2^{i+j}} \binom{i+j}{i}.
\end{align}
We substitute $s=i+j$ and denote the negative binomial random variable with parameter $p=1/2$ for representing the probability of appearing $k^{th}$ red ball after observing $X_k$ blue balls by $X_k$. This leads us to the following expression:
\begin{align}
        \mathbb{E}[D] & = \frac{1}{2} \left[ \sum_{s=0}^{k-1} \sum_{i=0}^s \frac{1}{2^{s}} \binom{s}{i} + \sum_{s=k}^{2k-2}\sum_{i=s-k+1}^{k-1} \frac{1}{2^s}\binom{s}{i}\right]\notag\\
        & = \frac{1}{2} \left[ k + \sum_{s=0}^{k-2}\left(1-2\sum_{i=0}^{s}\frac{1}{2^{s+k}}\binom{s+k}{i}\right)\right]\notag\\
        & = \frac{1}{2} \left[ k + \sum_{s=0}^{k-2}\left(1-2\mathbb{P}(X_k\leq s )\right)\right]\notag\\
        & = \frac{1}{2} \left[ k + \sum_{s=0}^{k-2}\left(2\mathbb{P}(X_k> s )-1\right)\right]\notag\\
        & = \frac{1}{2} \left[ 1 + 2\mathbb{E}[X_k 1_{X_k\leq k-1}]+2(k-1)\mathbb{P}(X_k\geq k)\right]\notag\\
        & = \frac{1}{2} \left[ 1 + \sum_{j=0}^{k-1}\binom{k+j-1}{j}\frac{2j}{2^{k+j}} +2(k-1)(1-\mathbb{P}(X_k< k))\right]\notag\\
        & = \frac{k}{2} + \sum_{j=0}^{k-1}\binom{k+j-1}{j}\frac{j}{2^{k+j}} \quad (\text{Using Lemma }\ref{negative-binom-mean1})\notag\\
        & = k -  \left[ \frac{(2k-1)}{2^{2k-1}}\binom{2k-2}{k-1} \right]. \quad (\text{Using Lemma }\ref{negative-binom-mean2})
    \label{ave-deg-mehrdad-to-mine}
\end{align}
Therefore, we reproduce the same formula for the average degree as in Theorem \ref{original-ave-deg} using the result in 
\cite{moharrami2024erlang}. 
We discussed this remark with the assumption of sparseness. However, Theorem \ref{original-ave-deg} only needs $k=o(\sqrt{n})$ which covers both the sparse and non-sparse regimes.

Next we determine the asymptotic behavior of the mean degree as both $n$ and $k$ go to infinity with a constraint on how fast $k$ grows relative to $n$.

\begin{thm}\label{thm:meandegree}
In the model $\mathit{LK}(n,k)$, assume that $k=o(n^{1/3})$ grows to infinity as $n$ goes to infinity. Suppose $D$ represents the degree of a randomly chosen vertex. We have the following asymptotic behavior of the average degree
\[\mathbb{E}[D]= k - \sqrt{\frac{k}{\pi}} + \frac{1}{8\sqrt{\pi k}} + o\left(\frac{1}{\sqrt{k}}\right).\]
\end{thm}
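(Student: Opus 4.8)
The plan is to start from the exact-asymptotic identity already proved in Theorem~\ref{original-ave-deg},
\[
\mathbb{E}\{D\}=k-\Bigl[\tfrac{2k-1}{2^{2k-1}}\tbinom{2k-2}{k-1}\Bigr]\bigl(1+O(k^2/n)\bigr),
\]
so that the whole theorem reduces to (i) an analytic expansion of the deterministic quantity $a_k:=\tfrac{2k-1}{2^{2k-1}}\binom{2k-2}{k-1}$ to two orders, and (ii) a check that the multiplicative error $O(k^2/n)$ is negligible at the scale $1/\sqrt{k}$ under the hypothesis $k=o(n^{1/3})$.

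For step (i) I would write $\binom{2k-2}{k-1}=\binom{2m}{m}$ with $m=k-1$ and insert the refined Stirling expansion of the central binomial coefficient, $\binom{2m}{m}=\tfrac{4^m}{\sqrt{\pi m}}\bigl(1-\tfrac{1}{8m}+O(m^{-2})\bigr)$. The powers of two cancel, leaving $a_k=\tfrac{2k-1}{2\sqrt{\pi(k-1)}}\bigl(1-\tfrac{1}{8(k-1)}+O(k^{-2})\bigr)$. The one place needing care is combining the index shift $m=k-1$ with the prefactor $\tfrac{2k-1}{2}$: from the identity $\tfrac{(2k-1)^2}{4(k-1)}=k+\tfrac{1}{4(k-1)}$ one gets $\tfrac{2k-1}{2\sqrt{k-1}}=\sqrt{k}\,(1+O(k^{-2}))$, so the shift contributes nothing at order $k^{-1}$; expanding $\tfrac{1}{8(k-1)}=\tfrac{1}{8k}+O(k^{-2})$ then yields
\[
a_k=\sqrt{\tfrac{k}{\pi}}-\frac{1}{8\sqrt{\pi k}}+O\!\bigl(k^{-3/2}\bigr).
\]

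Substituting this back into the formula of Theorem~\ref{original-ave-deg} gives
\[
\mathbb{E}\{D\}=k-\sqrt{\tfrac{k}{\pi}}+\frac{1}{8\sqrt{\pi k}}+O\!\bigl(k^{-3/2}\bigr)+O\!\Bigl(\tfrac{k^{5/2}}{n}\Bigr),
\]
the last term arising from $a_k\cdot O(k^2/n)=O(\sqrt{k}\cdot k^2/n)$. Since $k\to\infty$ we have $O(k^{-3/2})=o(k^{-1/2})$, and the hypothesis $k=o(n^{1/3})$ is precisely what makes $k^3=o(n)$, hence $k^{5/2}/n=o(k^{-1/2})$; this is exactly why the theorem tightens the $k=o(\sqrt{n})$ condition of Theorem~\ref{original-ave-deg} to $k=o(n^{1/3})$. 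Collecting the two error terms yields the claimed asymptotic. The main (and essentially only) obstacle is the bookkeeping of the $O(k^{-2})$ remainders through the Stirling expansion and the index shift, making sure the coefficient of $1/\sqrt{k}$ is produced cleanly as $\tfrac{1}{8\sqrt{\pi}}$ and not contaminated by lower-order sloppiness; once that is set up, the rest is routine algebra.
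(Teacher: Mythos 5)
Your proposal is correct and takes essentially the same route as the paper: both start from the exact formula of Theorem~\ref{thm:genasymptotic}, expand the central binomial coefficient $\binom{2k-2}{k-1}=\frac{2^{2k-2}}{\sqrt{\pi(k-1)}}\bigl(1-\frac{1}{8(k-1)}+o(1/k)\bigr)$ via Stirling, and absorb the resulting $O(k^{5/2}/n)$ contribution into $o(1/\sqrt{k})$ using $k=o(n^{1/3})$. Your bookkeeping through $m=k-1$ and the identity $\frac{(2k-1)^2}{4(k-1)}=k+\frac{1}{4(k-1)}$ is just a tidier arrangement of the same algebra, and your explicit check of why $k=o(n^{1/3})$ is exactly the needed hypothesis makes precise a point the paper leaves implicit.
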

\begin{proof}
    We simply use Stirling's approximation $n!=\sqrt{2\pi n} (\tfrac{n}{e})^n \left(1+\tfrac{1}{12n}+o(\tfrac{1}{n})\right)$ to find an asymptotic expression for $\binom{2k-2}{k-1}$, namely,
    \begin{align*}
      \binom{2k-2}{k-1}  & = \frac{(2k-2)!}{(k-1)!^2}  \\
      & = \frac{\sqrt{2\pi (2k-2)} (\frac{2k-2}{e})^{2k-2} \left(1+\frac{1}{12(2k-2)}+o(\frac{1}{k})\right)}{2\pi (k-1)  (\frac{k-1}{e})^{2k-2} \left(1+\frac{1}{12(k-1)}+o(\frac{1}{k})\right)^2} \\
      & = \frac{\sqrt{2\pi (2k-2)} 2^{2k-2} \left(1+\frac{1}{12(2k-2)}+o(\frac{1}{k})\right)}{2\pi (k-1)  \left(1+\frac{1}{12(k-1)}+o(\frac{1}{k})\right)^2} \\
      & = \frac{2^{2k-2} \left(1+\frac{1}{24(k-1)}+o(\frac{1}{k})\right)}{\sqrt{\pi(k-1)} \left(1+\frac{1}{6(k-1)}+o(\frac{1}{k})\right)} \\
      & = \left(\frac{2^{2k-2}}{\sqrt{\pi(k-1)}}\right)\left(1 - \frac{1}{8(k-1)} + o\left(\frac{1}{k}\right)\right).
    \end{align*}
  Substituting this into \eqref{original-ave-deg-eq} implies
    \begin{align*}
        \mathbb{E}[D] & =k - \left[ \frac{(2k-1)}{2^{2k-1}}\binom{2k-2}{k-1} \right](O(k^2/n)+1)\\
        & = k - \left[ \frac{(2k-1)}{2^{2k-1}} \left(\frac{2^{2k-2}}{\sqrt{\pi(k-1)}}\right)\left(1 - \frac{1}{8(k-1)} + o\left(\frac{1}{k}\right)\right) \right]\left(O(k^2/n)+1\right)\\
        & = k - \left[\left(\sqrt{\frac{k-1}{\pi}}+\frac{1}{2\sqrt{\pi(k-1)}}\right)\left(1 - \frac{1}{8(k-1)} + o\left(\frac{1}{k}\right)\right) \right]\left(O(k^2/n)+1\right)\\
        & = k - \sqrt{\frac{k}{\pi}} + \frac{1}{8\sqrt{\pi k}} + o\left(\frac{1}{\sqrt{k}}\right),
    \end{align*}
    which proves the result.
\end{proof}

\section*{Acknowledgements}
Vijay Subramanian and Hossein Dabirian acknowledge support from NSF via grant CCF-2008130. We are also grateful to Alan Frieze, Bruce Hajek, Remco van der Hofstad, Richard La and Mehrdad Moharrami for helpful comments. Finally, we thank the anonymous reviewers and the associate editor for feedback that greatly improved the readability and presentation of this paper.

\printbibliography

\end{document}